\newtheorem{theorem}{Theorem}
\newtheorem{proposition}[theorem]{Proposition}
\newtheorem{corollary}[theorem]{Corollary}
\theoremstyle{definition}
\newtheorem{definition}[theorem]{Definition}
\newtheorem{example}[theorem]{Example}
\newtheorem{remark}[theorem]{Remark}
\newcommand{\R}{\mathbb{R}}      
\newcommand{\Z}{\mathbb{Z}}
\newcommand{\N}{\mathbb{N}}
\def\Rib{{\operatorname{Rib}}}
\def\Rop{{\operatorname{Rop}}}
\def\Cr{{\operatorname{Cr}}}
\def\Len{{\operatorname{Len}}}
\def\Step{{\operatorname{Step}}}
\def\pr{{\operatorname{pr}}}
\newcommand{\ds}{\displaystyle}
\newcommand{\kw}{K_{w}}
\newcommand{\lw}{L_{w}}
\begin{document}

\title{Ribbonlength and crossing number for folded ribbon knots}

\author{Elizabeth Denne}
\address{Elizabeth Denne, Washington \& Lee University, Department of Mathematics, Lexington VA}
\email{dennee@wlu.edu}
\date{April 2, 20201}                                           

\makeatletter								
\@namedef{subjclassname@2020}{%
  \textup{2020} Mathematics Subject Classification}
\makeatother

\subjclass[2020]{57K10}
\keywords{Knots, links,  folded ribbon knots, ribbonlength, crossing number, arc index}

\begin{abstract}
We study Kauffman's model of folded ribbon knots: knots made of a thin strip of paper folded flat in the plane. The ribbonlength is the length to width ratio of such a folded ribbon knot. We show for any knot or link type that there exist constants $c_1, c_2>0$ such that the ribbonlength is bounded above by $c_1\Cr(K)^2$, and also by $c_2\Cr(K)^{3/2}$. We use a different method for each bound. The constant $c_1$ is quite small in comparison to $c_2$, and the first bound is lower than the second for knots and links with $\Cr(K)\leq$ 12,748. 
\end{abstract}

\maketitle


\section{Introduction}\label{sectionhistory}

In 2005, L. Kauffman \cite{Kauf05} described a flat knotted ribbon, or a {\em folded ribbon knot}, which can be imagined as a knot tied in a long thin rectangular strip, then flattened to the plane. The ribbon can be thought of as a set of rays parallel to a polygonal knot diagram, where the folds act as mirror segments, and the over and under information is appropriately assigned. This is illustrated for the trefoil knot in Figure~\ref{fig:trefoil-pentagon}. 

\begin{center}
\begin{figure}[htbp]
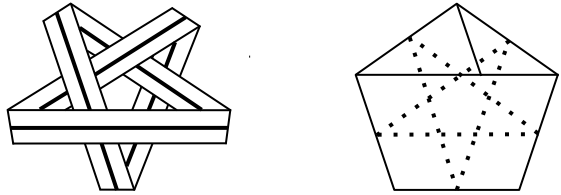

\begin{overpic}{trefoil-pentagon}
\end{overpic}
\caption{On the left a folded ribbon trefoil knot. On the right, the folded trefoil knot has been ``pulled tight'', minimizing the folded ribbonlength.}
\label{fig:trefoil-pentagon}
\end{figure}
\end{center}

For a knot (or link) $K$, we denote a folded ribbon knot as $K_w$, where $w$ is the fixed width of the ribbon. We define the {\em folded ribbonlength} $\Rib(\kw)$ to be the length of $K$ divided by the width of the ribbon.  The {\em ribbonlength problem} asks to find the minimum folded ribbonlength needed to tie a folded ribbon knot for a particular knot or link type. There are a number of papers \cite{DeM, DKTZ, Kauf05, KMRT, Tian} which find upper bounds for folded ribbonlength of both specific knots and families of knots.  An overview of this problem can be found in \cite{Den-FRS}.  In addition, we note that the ribbonlength problem is a 2-dimensional version of the {\em ropelength problem} which asks to find the minimum amount of rope needed to tie a knot in a rope of unit diameter.  (See for instance \cite{BS99,CKS,gm,lsdr,Pie}.)  The formal definition of a folded ribbon knot and folded ribbonlength can be found in Section~\ref{sect:definition}, and Section~\ref{sect:local-geom} gives some technical results about the geometry of a fold in a folded ribbon knot.

In this paper we are chiefly interested in the following problem: given a knot or link type $K$ with crossing number $\Cr(K)$, can we find a particular example of $\kw$ with $\Rib(\kw)$ less than some function of $\Cr(K)$?  A detailed discussion of this problem can be found in Section~\ref{sect:crossing-history}.  This question has also been asked for the ropelength problem, and our paper has been motivated by some of the many results about ropelength and crossing number. 

Our first main theorem is found in Section~\ref{sect:arc-presentation}, and uses arc-presentations of knots and links in its proof. Theorem~\ref{thm:crossing-squared} is inspired by the work of Grace Tian \cite{Tian}, and J.~Cantarella, X.W.C. Faber and C. Mullikin \cite{CFM}.
\begin{theorem} \label{thm:crossing-squared}
Any non-split link type $L$  with arc index $\alpha(L)$ contains a folded ribbon link $\lw$ with 
$$\Rib(\lw)\leq \begin{cases} 0.32\Cr(L)^2 + 1.28\Cr(L) + 0.23 & \text{ when $\alpha(L)$ is even,} \\
0.64\Cr(L)^2+2.55Cr(L) + 2.03 & \text{ when $\alpha(L)$ is odd.}
\end{cases}
$$ 
In particular, this bound holds for prime knots.
\end{theorem}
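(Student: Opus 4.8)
The plan is to construct an explicit folded ribbon link from an arc-presentation and then bound its ribbonlength in terms of the arc index, finally converting to a crossing-number bound. An arc-presentation of $L$ with arc index $\alpha = \alpha(L)$ realizes $L$ on an open-book structure with $\alpha$ half-planes (pages) sharing a common binding axis, where the link meets each page in a single simple arc, and consecutive arcs share endpoints on the binding. Equivalently, this can be drawn in the plane as a grid-like diagram: $\alpha$ horizontal segments and $\alpha$ vertical segments arranged on an $\alpha \times \alpha$ grid, with the link alternating between horizontal and vertical strokes. First I would take this grid diagram as the polygonal knot diagram underlying the folded ribbon knot $\lw$, fixing the grid spacing (say unit spacing) relative to a chosen ribbon width $w$.

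The key computation is then to bound the total length of this polygonal diagram. Each horizontal and each vertical segment has length at most the grid width, which is proportional to $\alpha$; since there are $\alpha$ horizontal and $\alpha$ vertical strokes, the total length is $O(\alpha^2)$ in units of the grid spacing. Choosing the grid spacing as a fixed multiple of $w$ (dictated by the requirement that the ribbon of width $w$ fit at each corner/fold without overlap — this is where the local fold geometry from Section~\ref{sect:local-geom} enters), the ribbonlength $\Rib(\lw) = \Len(L)/w$ comes out as a quadratic polynomial in $\alpha$. The next step is to substitute the known relationship between arc index and crossing number. For non-split alternating links one has $\alpha(L) = \Cr(L) + 2$, and by the results bounding arc index (I would invoke the general inequality $\alpha(L) \leq \Cr(L) + 2$ valid for all non-split links), so replacing $\alpha$ by $\Cr(L)+2$ in the quadratic turns the $O(\alpha^2)$ bound into the stated quadratic in $\Cr(L)$, with the explicit constants $0.32, 1.28, 0.23$ (respectively $0.64, 2.55, 2.03$) emerging from the precise geometric packing constant at the folds.

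The even/odd split in the statement comes from the parity of $\alpha$: when laying the ribbon along the grid, an even number of pages allows the arcs to be paired up and routed more efficiently (or the binding-axis turnbacks to be handled in pairs), whereas an odd $\alpha$ forces one extra turnback or an asymmetric configuration, roughly doubling the leading constant and shifting the lower-order terms. I would therefore carry out the length bound in two cases according to $\alpha \bmod 2$, being careful that in each case the ribbon actually closes up into an embedded folded ribbon link of the correct link type and that the width-$w$ ribbon does not self-overlap except at the designated folds.

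The main obstacle I expect is the fold geometry at the corners where horizontal and vertical strokes meet. At each such corner the ribbon must fold, and the folded ribbon of width $w$ occupies a neighborhood whose size forces a minimum separation between adjacent grid lines; getting the sharp packing constant (and hence the sharp leading coefficients $0.32$ and $0.64$) requires the careful local analysis of a single fold and of two nearby parallel strands, precisely the technical content promised in Section~\ref{sect:local-geom}. Verifying that these local width constraints are simultaneously satisfiable across the whole grid — that is, that no two ribbon segments that are not supposed to touch actually overlap — is the step where the construction could break down and where most of the real work lies; the conversion from $\alpha$ to $\Cr(L)$ and the parity bookkeeping are comparatively routine once the geometric constant is pinned down.
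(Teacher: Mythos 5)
Your construction is not the one the paper uses, and it cannot produce the stated constants. Laying the link out on an $\alpha\times\alpha$ grid with $\alpha$ horizontal and $\alpha$ vertical strokes of length at most $\alpha-1$ each is exactly Tian's grid-diagram argument, which the paper quotes as giving $\Rib(K)\leq 2g(K)(g(K)-1)\leq 2\Cr(K)^2+6\Cr(K)+4$: the leading coefficient is forced to be $2$, because with unit grid spacing and unit width the total length really is about $2\alpha^2$, and no amount of care with the corner folds (which are all right-angle folds of ribbonlength $1$ each, contributing only lower-order terms) changes that. The whole point of Theorem~\ref{thm:crossing-squared} is to beat this, and the missing idea is the \emph{spoked form} of the arc-presentation: project along the binding axis so that the axis becomes a single central point and each of the $\alpha$ arcs becomes a spoke, with consecutive spokes separated by angle $2\pi/\alpha$. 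Each passage of the diagram from the end of a spoke into the center and out again is (half of) an extended fold with fold angle a multiple of $2\pi/\alpha$, and Remark~\ref{rmk:fold-length} gives its ribbonlength as $\cot(\theta/2)$ (acute case) or $\cot(\pi/2-\theta/2)$ (obtuse case). The worst acute angle $\theta=2\pi/\alpha$ yields $\cot(\pi/\alpha)\approx \alpha/\pi$ per spoke, hence $\Rib(\lw)\leq\alpha\cot(\pi/\alpha)\approx\alpha^2/\pi$; Taylor-expanding $\cot$ and substituting $\alpha\leq\Cr(L)+2$ produces the leading coefficient $1/\pi\leq 0.32$. This radial layout, not a packing constant in a grid, is where the small constants come from.

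Your explanation of the even/odd dichotomy is also not the right mechanism. In the paper it has nothing to do with pairing up turnbacks: it comes from the largest possible (obtuse) fold angle at the central point. When $\alpha$ is even the largest fold angle is $\pi(1-2/\alpha)$, so $\pi-\theta=2\pi/\alpha$ and the obtuse folds cost no more than the acute ones, giving $\alpha\cot(\pi/\alpha)$; when $\alpha$ is odd the largest fold angle is $\pi(1-1/\alpha)$, so $\pi-\theta=\pi/\alpha$ and the bound degrades to $\alpha\cot(\pi/2\alpha)\approx 2\alpha^2/\pi$, which is exactly why the odd-case leading coefficient $0.64\approx 2/\pi$ is double the even-case one. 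As written, your argument would at best reprove Tian's quadratic bound with coefficient $2$, not the theorem as stated.
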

We use the second equation to get similar bounds for composite links in Theorem~\ref{thm:cr-sq-non-prime}.

In Section~\ref{sect:Hamiltonian}, we obtain even better upper bounds on folded ribbonlength by using a particular embedding of a knot in the cubic lattice developed by Y. Diao, C. Ernst and X. Yu \cite{DEY}. We project this embedding to the plane and find the following.
\begin{theorem} \label{thm:ribbonlength}
Any knot or link type $K$ contains a folded ribbon knot $\kw$ with
\begin{enumerate}
\item $ \Rib(K)\leq  9\Cr(K)^{3/2} +8\Cr(K) +6\sqrt{\Cr(K)} +6$, if $K$ is minimally Hamiltonian, and
\item  $\Rib(K)\leq 72\Cr(K)^{3/2} +32\Cr(K)+12\sqrt{\Cr(K)} +6$, otherwise.
\end{enumerate}

\end{theorem}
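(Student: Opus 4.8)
The plan is to build an explicit folded ribbon knot from the lattice embedding of Diao, Ernst and Yu \cite{DEY} and then carefully count contributions to the folded ribbonlength coming from the projected lattice walk. First I would recall from \cite{DEY} that any knot or link $K$ admits an embedding in the cubic lattice $\Z^3$ whose total edge-length is bounded in terms of $\Cr(K)$: the Hamiltonian case gives a walk confined to a box whose dimensions scale like $\sqrt{\Cr(K)}$, while the general case pays a constant-factor penalty, which is exactly the source of the two different leading coefficients ($9$ versus $72$) and the different lower-order terms in cases (1) and (2). The key structural input is a bound of the form $\Len(K)\leq a\,\Cr(K)^{3/2}+(\text{lower order})$ on the lattice embedding, together with information about how many edges run in each of the three coordinate directions.

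The central step is the projection to the plane. I would project the cubic-lattice embedding along one coordinate axis (say the $z$-axis) onto the $xy$-plane, so that the $z$-direction edges collapse to points and become the crossings of the resulting knot diagram, while the $x$- and $y$-direction edges become the horizontal and vertical segments of a rectilinear (polygonal) knot diagram. The resulting diagram is a grid-like projection with only right-angle turns, and the over/under data at each crossing is inherited from the $z$-coordinates. This produces a polygonal knot diagram to which Kauffman's folded ribbon construction from Section~\ref{sect:definition} applies: thicken the diagram to a ribbon of width $w$, with folds at the corners.

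Next I would compute the folded ribbonlength of this ribbon. The ribbonlength is the total length of the projected polygonal diagram divided by $w$, so I must account for three separate effects. The dominant contribution is simply the projected length inherited from the lattice walk, which gives the $\Cr(K)^{3/2}$ term. On top of this, the local fold geometry from Section~\ref{sect:local-geom} forces each right-angle corner to consume a small extra amount of length (relative to $w$) so that the ribbon folds cleanly without self-overlap; summing these corner contributions over all turns gives the $\Cr(K)$ term. Finally there are boundary/end effects and the adjustments needed near crossings to keep the ribbon flat and non-overlapping, which contribute the $\sqrt{\Cr(K)}$ and constant terms. Choosing $w$ proportional to the lattice spacing (essentially $w$ equal to the spacing so that adjacent strands of the ribbon just fit within the lattice tube) is what converts the edge-length bound into the stated ribbonlength bound, and tracking the exact constants from \cite{DEY} through this scaling yields the coefficients $9,8,6,6$ and $72,32,12,6$.

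The main obstacle I anticipate is controlling the local geometry at corners and crossings well enough to guarantee that the folded ribbon is genuinely immersed and flat—i.e. that neighboring ribbon segments do not collide—while keeping the extra length charged to each fold under tight quantitative control, since sloppy estimates here would inflate the linear and sub-linear coefficients. This is where the fold-geometry lemmas of Section~\ref{sect:local-geom} do the real work: I would use them to bound the length cost of each fold in terms of $w$ and the turning angle (here always $90^\circ$), and then the combinatorial counting of corners and crossings in the projected lattice diagram—read off directly from the per-direction edge counts in the \cite{DEY} embedding—translates those local bounds into the global lower-order terms. The Hamiltonian hypothesis in case (1) is what lets the embedding fill the lattice box efficiently and so reduces both the leading constant and the corner count, accounting for the gap between the two cases.
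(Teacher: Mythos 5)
Your overall strategy---take the Diao--Ernst--Yu cubic lattice embedding of $K$, project it to a coordinate plane, and read off the folded ribbonlength of the resulting width-one lattice diagram---is the same as the paper's. However, two of your specific steps would not produce the stated coefficients. First, you project along the $z$-axis onto the $xy$-plane. The paper projects along the $y$-axis onto the $xz$-plane, and this choice matters: in the embedding of Theorem~\ref{thm:cubic-length} the leading step counts are approximately $3n\lceil\sqrt{n}\rceil$ in $x$, $8n\lceil\sqrt{n}\rceil$ in $y$, and $6n\lceil\sqrt{n}\rceil$ in $z$, so one must project away the $y$-steps (the most numerous direction) to obtain the leading coefficient $3+6=9$; projecting away the $z$-steps leaves $3+8=11$ and the claimed bound fails.

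Second, and more fundamentally, you attribute the lower-order terms $8\Cr(K)+6\sqrt{\Cr(K)}+6$ to ``corner costs'' charged to each right-angle fold via Section~\ref{sect:local-geom}, and to end effects near crossings. That is not how the bound arises, and the accounting cannot work as described: in Kauffman's model the folded ribbonlength is simply $\Len(K)/w$ for the core diagram, with no length added at a fold (the extended-fold computation of Remark~\ref{rmk:fold-length} is a \emph{lower} bound on the diagram length needed to realize a given fold angle, used only in the arc-presentation argument of Section~\ref{sect:arc-presentation}). Moreover the projected lattice walk has length, and hence potentially a number of corners, of order $n^{3/2}$, so any per-corner charge would feed into the leading term rather than produce a clean linear term. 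In the paper, every lower-order term comes from the lattice step count itself: the $2n$ extra $x$-steps from resolving the $n$ vertices, the $6\lceil\sqrt{n}\rceil$ from the closing path of the Hamiltonian cycle, and the slack in $\lceil\sqrt{n}\rceil<\sqrt{n}+1$. The split between cases (1) and (2) is then just Theorem~\ref{thm:Hamiltonian}: a general $K$ admits a Hamiltonian RP-graph with $n\leq 4\Cr(K)$ vertices, and substituting $4\Cr(K)$ for $n$ multiplies the three non-constant coefficients by $8$, $4$, and $2$. Your instinct to verify that the width-one ribbon on the unit lattice is actually allowed (immersed, with disjoint fold lines) is reasonable---the paper asserts this rather than proving it---but that verification does not generate any of the terms in the bound.
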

Here, a knot or link which has a minimal crossing diagram containing a Hamiltonian cycle is called {\em minimally Hamiltonian} (see  Section~\ref{sect:Hamiltonian}).

We note that Theorem~\ref{thm:ribbonlength} gives the best results for folded ribbonlength for knots and links with large crossing number. We have included Theorem~\ref{thm:crossing-squared} in this paper, as $0.64\Cr(L)^2+2.55Cr(L) + 2.03$ is smaller than $72\Cr(K)^{3/2} +32\Cr(K)+12\sqrt{\Cr(K)} +6$ when $\Cr(K)\leq 12,748$. Since just about all knots and links that are usually encountered in physical applications have small crossing number, both theorems are useful.

\section{Folded ribbon knots and ribbonlength}
\subsection{Definition and ribbonlength} \label{sect:definition}

Following standard knot theory texts (for instance \cite{Adams,Crom, Liv})  we assume that a {\em knot} $K$ is an embedding $K:S^1\rightarrow \R^3$ that is ambient isotopic to a polygonal knot.  A {\em link} is a finite disjoint union of knots. A  {\em knot diagram} is a projection of $K$ onto a plane, where gaps are added to show the relative heights of two strands at a crossing.  As mentioned above, our work on folded ribbon knots is inspired by the work of Kauffman \cite{Kauf05}. A detailed discussion of the definition of a folded ribbon knot can be found in \cite{DKTZ}, and we only give an overview of the main ideas here. (Note that an even more general description for smooth ribbon knots in the plane can be found in \cite{DSW}.)

In order to construct a folded ribbon knot, we take a {\em polygonal knot diagram}, and build a ribbon around it.   We denote the vertices of the diagram by $v_1, v_2, \dots, v_n$ and edges $e_1=[v_1,v_2], \dots, v_n=[v_n,v_1]$. If $K$ is an oriented knot, we assume the numbering of vertices and edges follows the orientation. Note that we do not assume that the polygonal knot diagram is regular, instead we remember that the crossing information is assumed to be consistent (see \cite{DKTZ} Definition 2.2). This means, for example, that the unknot can have a polygonal knot diagram with just two edges, where one edge is ``over'' the other edge. This unknot diagram captures the following example: take a ribbon which is an annulus, and then fold it flat with two folds.

We now construct our folded ribbon knot of width $w$ following \cite{DKTZ} Definitions 2.4 and 2.5. We start by defining the {\em fold angle} at vertex $v_i$  to be the angle $\theta_i$ (where $0\leq\theta_i\leq \pi$) between edges $e_{i-1}$ and $e_i$. (This angle is said to be positive when $e_i$ is to the left of $e_{i-1}$ as in Figure~\ref{fig:ribbon-construct} (left), and is negative when $e_i$ is to the right.)    Second, we view the polygonal knot diagram $K$ as the centerline of the folded ribbon knot, with the edges of the ribbon at distance $\nicefrac{w}{2}$ from $K$. Third, the geometry of the situation requires that the fold line is perpendicular to the angle bisector of the fold angle. For example, in Figure~\ref{fig:ribbon-construct} (left), the fold angle is $\theta_i=\angle ECF$, the angle bisector is $DC$, and the fold line is $AB$.  Using the geometry of the figure we see that $\angle GAB=\theta_i/2$ in right triangle $\triangle AGB$. Thus $|AB|=\frac{w}{\cos(\nicefrac{\theta_i}{2})}$ guarantees the ribbon width $|AG|=w$. Fourth, the folded ribbon's boundary is constructed by joining the ends of the fold lines at each vertex. 

In summary, we take an oriented polygonal knot diagram $K$ and constructed a folded ribbon knot as follows.
\begin{compactenum}
\item If the fold angle $\theta_i<\pi$, we place a fold line of length $\frac{w}{\cos(\nicefrac{\theta_i}{2})}$ centered at $v_i$ perpendicular to the angle bisector of $\theta_i$.   If $\theta_i=\pi$, there is no fold line.
\item Add in the ribbon boundaries by joining the ends of the fold lines at $v_i$ and $v_{i+1}$. 
\item The folded ribbon knot inherits an orientation from $K$.
\end{compactenum}

\begin{figure}[htbp]
\begin{center}
\begin{overpic}{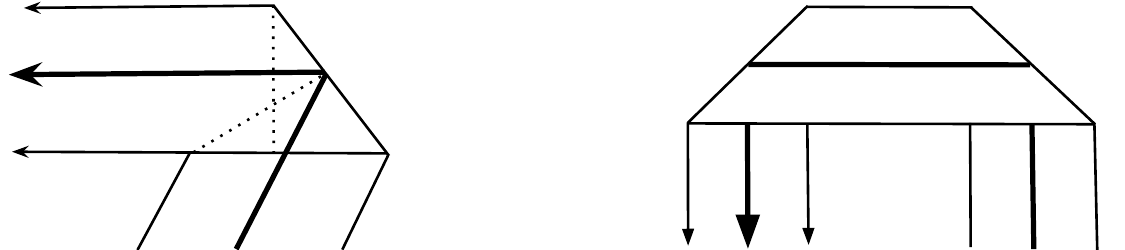}
\put(23,23){$A$}
\put(35,8){$B$}
\put(30,15.5){$C=v_i$}
\put(24.8,13){$\theta_i$}
\put(26,6){$E$}
\put(12.5,6){$D$}
\put(21,16.5){$F$}
\put(21,6){$G$}
\put(5,17){$e_i$}
\put(15,1){$e_{i-1}$}

\put(80,18){$e_i$}
\put(92,-2){$e_{i-1}$}
\put(67,-1){$e_{i+1}$}
\put(92,17){$v_i$}
\put(61,18){$v_{i+1}$}
\end{overpic}
\caption{On the left, a close-up view of a ribbon fold, with fold angle $\theta_i=\angle FCE$. On the right, the construction of the ribbon centered on edge $e_i$. Figure re-used with permission from~\cite{DKTZ}. }
\label{fig:ribbon-construct}
\end{center}
\end{figure}

The astute reader will realize that a folded ribbon knot is really a PL-immersion of an annulus or M\"obius strip into the plane, where the fold lines are the only singularities. 
In addition, we note that very wide ribbons might not be physically possible. Parts of the ribbon cannot pierce the folds, as might happen to the folds in Figure~\ref{fig:ribbon-construct} (right) if the width gets too large.
To prevent this, we assume the fold lines are disjoint.  More formally, we define the notion of an allowed ribbon.

\begin{definition}[\cite{DKTZ} Definition 2.7] Given an oriented polygonal knot diagram $K$, we say the folded ribbon $K_{w}$ of width $w$  is {\em allowed} provided
\begin{enumerate}
\item The ribbon has no singularities (is immersed), except at the fold lines which are assumed to be disjoint.
\item $K_w$ has consistent crossing information, and moreover this agrees
\begin{enumerate} \item with the folding information at each fold, and
\item with the crossing information of the knot diagram $K$.
\end{enumerate}
\end{enumerate}
\end{definition}

From now on {\bf we assume that our folded ribbon knots have an allowed width}. This is a reasonable assumption, since we can always construct folded ribbon knots for ``small enough'' widths (proved in \cite{Den-FRF}).

Given a  folded ribbon knot, we can ask to find the least length of paper needed to tie it. To formalize this idea we define a scale-invariant quantity called {\em folded ribbonlength}.
\begin{definition}[\cite{Kauf05}] The {\em folded ribbonlength} of a folded ribbon knot $K_w$  of width $w$ is defined to be the following: $$ \Rib(K_w) = \frac{\Len(K_w)}{w}.$$
\end{definition}

The {\em (folded) ribbonlength problem} asks us to ``minimize'' the folded ribbonlength of knot or link type. Here, we have two choices. We can fix the width and find the infimum of the length of a knot diagram, or we can fix the length of the knot diagram and find the supremum of the width. As a simple example, consider the folded ribbon unknot (described above) with knot diagram consisting of just two edges. The ribbonlength here is zero. Either the width is fixed and the length $\Len(K)\rightarrow 0$, or the length is fixed and the width $w\rightarrow\infty$.  In practice, for a folded ribbon knot $\kw$, we will usually set the width $w=1$, and compute the length of the the knot diagram. In this case, finding the folded ribbonlength is simple: $\Rib(\kw)=\Len(K)$, and the computation gives an upper bound on the minimum folded ribbonlength for the knot type. As mentioned in the introduction,  there are a number of papers,  including \cite{DeM, DKTZ, Kauf05, KMRT, Tian}, which compute upper bounds for folded ribbonlength of specific knots and of certain infinite families of knots.

\subsection{Local geometry of folded ribbons} \label{sect:local-geom}
Before we get into the main results of the paper, we pause to examine the geometry of a fold in a folded ribbon knot.
In Figure~\ref{fig:AcuteVsObtuse} we see folds  where the fold angle is acute (left) and obtuse (right). In both cases, the {\em fold from angle $\theta$} consists of two overlapping triangles of ribbon, determined by the knot diagram $DC+CH$. Again using the notation in Figure~\ref{fig:AcuteVsObtuse}, we define an {\em extended fold from angle $\theta$} to be the ribbon determined by  $EC+CI$. We will use the folded ribbonlength of an extended fold to get our first upper bound on folded ribbonlength in Section~\ref{sect:arc-presentation}.

\begin{center}
\begin{figure}[htbp]
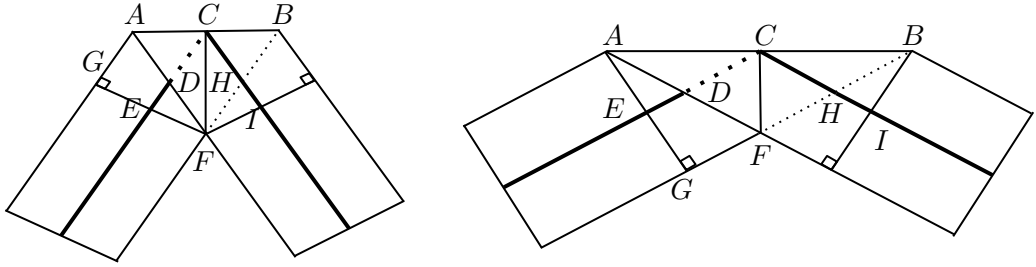

\begin{overpic}{AcuteVsObtuse}
\put(12,23){$A$}
\put(19,23){$C$}
\put(26,23){$B$}
\put(17,16.5){$D$}
\put(11.5,14){$E$}
\put(18.5,9){$F$}
\put(8,18.5){$G$}
\put(20,16.5){$H$}
\put(23.5, 13){$I$}
\put(57.5,21){$A$}
\put(72,21){$C$}
\put(86,21){$B$}
\put(67.5,15.5){$D$}
\put(57.5,14){$E$}
\put(71.5,9.5){$F$}
\put(64,6.25){$G$}
\put(78,13.5){$H$}
\put(83.4,11){$I$}
\end{overpic}
\caption{On the left a fold where the fold angle $\angle DCH=\theta$ is acute, on the right a fold where the fold angle is obtuse.}
\label{fig:AcuteVsObtuse}
\end{figure}
\end{center}

\begin{remark}\label{rmk:fold-length}
In the following, use the notation in Figure~\ref{fig:AcuteVsObtuse}, and assume the fold angle $\angle DCH=\theta$, with $0<\theta<\pi$. Let  $\mathcal{D}=EC+CI$ denote the knot diagram of an extended fold from angle $\theta$. 
We have used the geometry of parallel lines and trigonometric arguments (all found in \cite{Den-FRF} Proposition 14) to show that $|DC|=\ds\frac{w}{2\sin\theta}$, and $|DE|=(\nicefrac{w}{2})\cot\theta$ when $\theta$ is acute, and $|DE|=(\nicefrac{w}{2})\cot(\pi-\theta)$ when $\theta$ is obtuse. This means the length of $\mathcal{D}$ is
$$\Len(\mathcal{D}) = |EC|+|CI|=\begin{cases} w\cot(\nicefrac{\theta}{2})
 & \text{ when $0<\theta\leq \nicefrac{\pi}{2}$,} \\
w\cot(\nicefrac{\pi}{2}-\nicefrac{\theta}{2})& \text{when $\nicefrac{\pi}{2}\leq \theta< \pi$.}
\end{cases}
$$
We then deduced (see \cite{Den-FRF} Corollary 16) that the folded ribbonlength of the extended fold $\mathcal{D}_w$ is the following  
$$\Rib(\mathcal{D}_w)=\begin{cases} \cot(\nicefrac{\theta}{2})
 & \text{ when $0<\theta\leq \nicefrac{\pi}{2}$,} \\
\cot(\nicefrac{\pi}{2}-\nicefrac{\theta}{2})& \text{ when $\nicefrac{\pi}{2}\leq \theta< \pi$.}
\end{cases}
$$
In particular, we note that the folded ribbonlength of an extended fold with fold angle $\theta=\nicefrac{\pi}{2}$ is 1.
\end{remark}

 \subsection{Folded ribbonlength and crossing number}\label{sect:crossing-history}
 
The results in this paper have been inspired by trying to understand the relationship between the folded ribbonlength of a knot $K$ and its crossing number
$\Cr(K)$. The {\em ribbonlength crossing number problem} asks us to find constants $c_1, c_2, \alpha, \beta$ such that 
\begin{equation}\label{eq:bounds}
c_1\cdot( \Cr(K))^\alpha\leq \Rib(K_w)\leq c_2\cdot (\Cr(K))^\beta.
\end{equation}
Y. Diao and R. Kusner \cite{DK} conjecture that $\alpha=\nicefrac{1}{2}$ and $\beta=1$ in Equation~\ref{eq:bounds}, and have work in progress showing $\alpha=\nicefrac{1}{2}$. The rest of this paper is devoted understanding Equation~\ref{eq:bounds}.

Just as the folded ribbonlength problem is a 2-dimensional analogue of the ropelength problem, the conjecture above is the analogue of the ropelength crossing number problem finding upper and lower bounds on ropelength in terms of crossing number. 
There have been many papers making progress this problem, for example \cite{Buck, BS07, CFM, CKS-Nat, CKS, DEKZ, DEPZ, DEY, HKON, HNO}.

We now give evidence for the conjecture that $\alpha=\nicefrac{1}{2}$ by giving examples of infinite families of knots and links where for each $p$ in the range $\nicefrac{1}{2}\leq p\leq 1$, there is a constant $c_p>0$ such that $\Rib(K_w)\leq c_p\cdot \Cr(K)^p$.

\begin{example} Inspired by G. Buck \cite{Buck}, we give example of an infinite family of links with $p=\nicefrac{1}{2}$. In Figure~\ref{fig:Hopf-Chain} (left), we see a folded ribbon link diagram of a Hopf link $A\cup B$. Here, both $A$ and $B$ are unknots with 2 edges that are linked together.  We now change this diagram to give a ``fat'' Hopf link. Suppose that $A$  consists of $n$ nested unknots and that $B$ consists of  $n$ nested unknots, where each of $A$'s unknots is linked with all $n$ of $B$'s unknots and vice versa. To give some more details, set the width $w=1$, and let the length of one component of $A$ to be $\Len(A_i)=2(1+\nicefrac{1}{2^i})\leq 3$. Arrange the $n$-components of $A$ so that $A_2$ is nested inside of $A_1$, $A_3$ is nested inside of $A_2$ and so on. For example, suppose the projection of $A_1$ is $[-\nicefrac{3}{4}, \nicefrac{3}{4}]$ on the $x$-axis, the projection of $A_2$ is $[-\nicefrac{5}{8}, \nicefrac{5}{8}]$ on the $x$-axis, the projection of $A_3$ is $[-\nicefrac{9}{16}, \nicefrac{9}{16}]$ on the $x$-axis, etc.  Similarly, we can arrange the nested components of $B$ to lie along the $y$-axis, and have each component of $B$ be linked with all of the components of $A$. (We will omit a detailed description of the crossing information of the arcs involved.) We compute the folded ribbonlength $\Rib((A\cup B)_w)\leq 2\cdot 3\cdot n=6n$, and the crossing number $\Cr(A\cup B)=2\cdot n\cdot n=2n^2$. Together this means that $\Rib((A\cup B)_w)\leq 3\sqrt{2}\cdot \Cr(K)^{1/2}$.  
\end{example}

\begin{center}
\begin{figure}
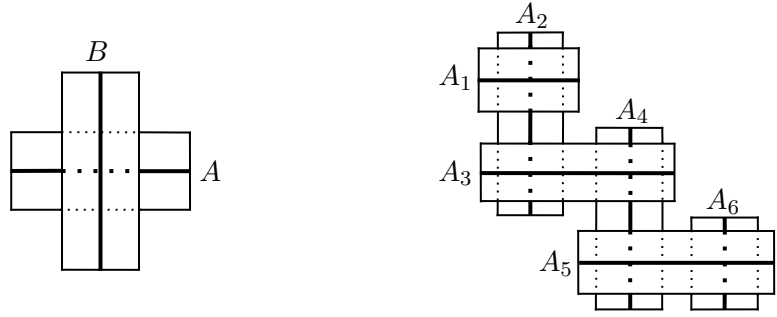

\begin{overpic}{Hopf-SimpleChain}
\put(30,14){$A$}
\put(18,26.5){$B$}
\put(55,24){$A_1$}
\put(63,30.5){$A_2$}
\put(55,14){$A_3$}
\put(73.5,20.5){$A_4$}
\put(65.5,4.5){$A_5$}
\put(83,11){$A_6$}
\end{overpic}
\caption{On the left, a folded ribbon Hopf Link $A\cup B$. On the right, a simple folded ribbon chain link with 6 components.}
\label{fig:Hopf-Chain}
\end{figure}
\end{center}

\begin{example} We now follow the ideas of J. Cantarella, R. Kusner, J.M. Sullivan \cite{CKS-Nat} to find an example of an infinite family of links where  $p=1$.  Take a simple chain link $L$ of $n$-components, where each component is a unknot made of 2 edges, and consider the corresponding folded ribbon link $L_w$. For example, a 6-component simple chain folded ribbon link is shown in Figure~\ref{fig:Hopf-Chain} (right). In that figure, we have exaggerated the length of each component to make the link easier to understand. When the width $w=1$, we can arrange for the two end components ($A_1$ and $A_6$ in  Figure~\ref{fig:Hopf-Chain} (right)) to have folded ribbonlength 2, and the other $(n-2)$-components to have folded ribbonlength 4. Thus the folded ribbonlength $\Rib(\lw)=4\cdot (n-2) + 2\cdot 2 = 4n-4$, while the crossing number $\Cr(L)=2(n-1)$. Together this gives $\Rib(\lw) \leq 2\cdot \Cr(L)$. We can take combinations of ``fat'' Hopf links and simple chain links to give many different families of link types $L$. We can choose the combinations such that for any $p$ in the range $\nicefrac{1}{2}\leq p \leq 1$, there is a constant $c_p>0$ and a folded ribbon link $\lw$ with $\Rib(\lw)\leq c_p\cdot \Cr(L)^p$.
\end{example}

In 2020, the author along with J.C. Haden, T. Larsen, and E. Meehan \cite{Den-FRF} proved that for any $p,q\geq 2$, there is a constant $c>0$, such that the $(p,q)$ torus link type $L$ contains a folded ribbon link $\lw$, such that $$\Rib(\lw)=p+q \le c \cdot Cr(L)^\frac{1}{2}.$$ 
Specifically, (in \cite{Den-FRF} Theorem 5) we showed that if $p,q\ge 2$ and $p=aq+b$ for integers $a,b\geq 0$, then  $c=2\sqrt{2}(a+\frac{b}{2})$. This again gives evidence that $\alpha=\nicefrac{1}{2}$ in Equation~\ref{eq:bounds}. 
As a simple example, we apply this result to the infinite family of torus knot types $T(q+1,q)$. For this family we have $a=1$ and $b=1$. Hence for each $q=2,3,4,\dots$, the $(q+1,q)$ torus knot type $K$ contains a folded ribbon knot $\kw$ with $$\Rib(\kw)\le 3\sqrt{2}(Cr(K))^\frac{1}{2}.$$ 
This last example is not unexpected, in that $T(n,n-1)$ torus families are an example  in \cite{CKS-Nat} where ropelength grows as the $\nicefrac{3}{4}$ power of crossing number.

The next question to tackle is to ask what is known about the upper bound $\beta$ in Equation~\ref{eq:bounds}? B. Kennedy, T.W. Mattman, R. Raya, and D. Tating~\cite{KMRT} made a good start on this question by giving upper bounds on folded ribbonlength for the $(p,2)$, $(q+1,q)$, $(2q+1,q)$, $(2q+2,q)$, and $(2q+4,q)$ torus knot families. They combined this, with fact that the crossing number of a $(p,q)$ torus knot is $\min\{p(q-1),q(p-1)\}$,  to show the upper bound on folded ribbonlength is quadratic in crossing number for the $(p,2)$ torus knots, and linear in crossing number for the four other torus knot families.  Again in  2020, we (see \cite{Den-FRF}) gave new constructions of 2-bridge knots, $(2,p)$ torus, twist and pretzel knots. (These were the first folded ribbon constructions for 2-bridge and pretzel knots.) Using these constructions we showed the upper bound of folded ribbonlength is linear in crossing number for these families of knots and links. 

In 2017, Tian~\cite{Tian} provided the first proof that in Equation~\ref{eq:bounds}, $\beta\leq 2$ for all knots and links. She used the fact that every knot and link has a grid diagram associated to it \cite{Cro95, OSZ}. A {\em grid diagram}, with grid number $n$, is an $n\times n$ square grid with $n$ $X$'s and $n$ $O$'s arranged so that every row and column contains exactly one $X$ and one $O$.  The grid index of a knot, $g(K)$, is the minimum grid number needed for the knot, and we know $g(K)\leq \Cr(K)+2$ (see \cite{BP, Bel, Cro98}).  Tian showed that
\begin{equation}
\Rib(K)\leq 2g(K)(g(K)-1)  \leq 2\Cr(K)^2 +6\Cr(K) + 4. \label{eq:Tian}
 \end{equation}

It turns out that we can do better! The fact that every knot has a grid diagram comes from the fact that every knot has an arc-presentation. In Section~\ref{sect:arc-presentation}, we use arc-presentations to get an upper bound on folded ribbonlength which is quadratic in crossing number and has smaller coefficients than those in Equation~\ref{eq:Tian} . Finally, in Section~\ref{sect:Hamiltonian}, we are inspired by a particular embedding of a knot in the cubic lattice by Diao {\em et al.} \cite{DEY} to do even better. We prove that for all knots and links, $\beta\leq \nicefrac{3}{2}$ in Equation~\ref{eq:bounds}.

 
\section{Arc-presentations, ribbonlength and crossing number}\label{sect:arc-presentation}

In 2004, Cantarella, Faber and Mullikin \cite{CFM} use arc-presentations of knots to get an upper bound  on ropelength which is quadratic in crossing number.  Inspired by this work, we use arc-presentations to get an upper bound on folded ribbonlength which is quadratic in crossing number. 

We recall  that an {\em arc-presentation} of a link $L$, is an embedding of $L$ in a finite number of open half-planes around a common axis so that each half-plane meets $L$ in a single simple arc. (See for instance \cite{CFM, Crom}.) The minimum number of pages needed to present a link in this manner is a link invariant called the {\em arc-index} of $L$, and is denoted by $\alpha(L)$. It is known that every link has an arc-presentation \cite{Crom}. In addition we know that for any-non split link has $\alpha(L)\leq \Cr(L)+2$ (with equality when the link is alternating \cite{Cro98}, and with the strict inequality when the link is non-alternating \cite{BP, Bel}). 

Following \cite{CFM}, we can arrange (by isotopy) for $L$ to intersect the axis only at points $1, 2, \dots, \alpha$. We can also arrange for the planes to be located at angle $\theta_i=\frac{2\pi i}{\alpha}$ for $i=0,1,\dots, \alpha-1$, around the axis. 
An arc-presentation of the trefoil knot is shown in Figure~\ref{fig:trefoil-arc-presentation} (left), where the numbers on the arc indicate the half-plane that each arc lies in. 
Figure~\ref{fig:trefoil-arc-presentation} (right) shows the same arc-presentation, but viewed from a point on the axis above the knot. In this {\em spoked form}, the axis is the central point, and each arc is a spoke coming out from it. The numbers at the end of the spoke are the points where the arc intersects along the axis.

\begin{center}
\begin{figure}
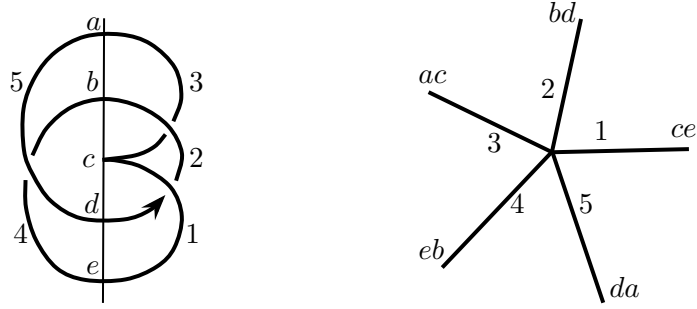
\begin{overpic}{trefoil-arc-presentation}
\put(15.5,36){$a$}
\put(15.5,28){$b$}
\put(15,18){$c$}
\put(15.25,12){$d$}
\put(15.5,4){$e$}
\put(28.5,8){$1$}
\put(29,18){$2$}
\put(29,28){$3$}
\put(6,8){$4$}
\put(5.5,28){$5$}
\put(92,22){$ce$}
\put(76,37){$bd$}
\put(59,29){$ac$}
\put(59,6){$eb$}
\put(84,1){$da$}
\put(82,21.5){$1$}
\put(75,27){$2$}
\put(68,20){$3$}
\put(71,12){$4$}
\put(80,12){$5$}
\end{overpic}
\caption{Two views of the same arc-presentation of a trefoil knot. On the left, an axial form where the numbers on the arc indicate  the half-plane the arc lies in. On the right, a spoked form where each spoke represents an arc and the letters represent the points where the arc intersects the axis.}
\label{fig:trefoil-arc-presentation}
\end{figure}
\end{center}

\begin{proposition} \label{prop:spoked}
An arc-presentation of a link $L$ with arc-index $\alpha$ can be realized with folded ribbonlength at most 
$$\Rib(\lw)\leq
\begin{cases} \alpha \cot(\frac{\pi}{\alpha}) & \text{ when $\alpha$ is even}, \\
\alpha \cot(\frac{\pi}{2\alpha}) &  \text{ when $\alpha$ is odd}.
\end{cases}
$$
\end{proposition}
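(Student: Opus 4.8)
The plan is to realize the given arc-presentation as an explicit planar polygonal link diagram and then assemble its folded ribbonlength out of the extended-fold contributions computed in Remark~\ref{rmk:fold-length}. Concretely, I would place $\alpha$ vertices $P_0,\dots,P_{\alpha-1}$ at the $\alpha$-th roots of unity on a circle, one vertex for each of the $\alpha$ points where $L$ meets the axis, and join them by straight edges following the cyclic order in which the arcs visit the axis points. This is exactly the spoked form of Figure~\ref{fig:trefoil-arc-presentation} straightened out; for the trefoil it reproduces the pentagram. The over/under data is inherited from the presentation. The first key step is to arrange the sizes so that the two extended folds sitting at the endpoints of each edge exactly abut, tiling the whole diagram; then $\Len(\lw)=\sum_{v}\Len(\mathcal{D}^{(v)})$ and hence $\Rib(\lw)=\sum_{v}\Rib(\mathcal{D}^{(v)}_w)$, reducing everything to a sum of $\alpha$ extended-fold ribbonlengths governed by the fold angles.

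Second, I would compute the fold angles. By the inscribed-angle theorem, the fold angle at a vertex $P_j$ equals half of the arc $P_aP_b$ not containing $P_j$, where $P_a,P_b$ are the two neighbours of $P_j$ in the cyclic order; since the vertices sit at $\alpha$-th roots of unity, this angle is $k\pi/\alpha$ for some integer $k$ with $1\le k\le \alpha-2$. Viewed as a function of the fold angle, the extended-fold ribbonlength of Remark~\ref{rmk:fold-length} is a ``valley'' that attains its minimum $1$ at $\theta=\nicefrac{\pi}{2}$ and increases toward both ends of $(0,\pi)$, so over the admissible angles $k\pi/\alpha$ its maximum occurs at the smallest admissible angle. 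In general the smallest possible fold angle is $\pi/\alpha$ (when $k=1$, i.e.\ the two neighbours are adjacent on the circle), giving $\Rib(\mathcal{D}^{(v)}_w)\le\cot(\nicefrac{\pi}{2\alpha})$ at every vertex and hence the bound $\alpha\cot(\nicefrac{\pi}{2\alpha})$. This already proves the odd case and holds for every $\alpha$.

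Third, to improve the bound to $\alpha\cot(\nicefrac{\pi}{\alpha})$ when $\alpha$ is even, I would choose the placement more cleverly. The axis points together with the arcs form a graph in which every vertex has degree two, namely the cyclic order of $L$ itself; for a knot this is a single $\alpha$-cycle, which is bipartite precisely when $\alpha$ is even. In that case I would $2$-colour the cycle and place one colour class at the even-indexed roots of unity and the other at the odd-indexed ones. Then every edge joins vertices of opposite parity, so the integer $k$ above is even at each vertex, forcing every fold angle into $[\nicefrac{2\pi}{\alpha},\,\pi-\nicefrac{2\pi}{\alpha}]$; the valley estimate then gives $\Rib(\mathcal{D}^{(v)}_w)\le\cot(\nicefrac{\pi}{\alpha})$ at every vertex, and summing over the $\alpha$ folds yields $\alpha\cot(\nicefrac{\pi}{\alpha})$.

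I expect the main obstacle to be the first step: guaranteeing that the diagram can genuinely be tiled by abutting extended folds, which means simultaneously prescribing the fold angles (forced by the combinatorics) and the edge lengths (forced by the extended-fold geometry) while still having the polygon close up and the width-$w$ ribbon be \emph{allowed} in the sense of the Definition, with disjoint fold lines and no crossings beyond those dictated by the presentation. A secondary difficulty is the bipartite argument for multi-component links, where an even total arc-index $\alpha$ need not split into components each using an even number of arcs; here I would either recolour component by component or fall back to the weaker odd-type estimate on the offending components, and it is this distinction that I expect accounts for the separate even and odd statements in the Proposition.
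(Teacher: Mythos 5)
Your construction is not the paper's. The paper keeps the spoked form literally: one ray per page/arc, the ribbon running out to the end of each spoke and back, and all of the genuine folds occurring at the central axis point, where the fold angles are the angles between consecutive pages in the traversal and hence are integer multiples of $2\pi/\alpha$. The even/odd dichotomy then falls out of arithmetic: the multiples of $2\pi/\alpha$ lying in $(0,\pi)$ stay at distance $2\pi/\alpha$ from both $0$ and $\pi$ when $\alpha$ is even (since $\theta=\pi$ is itself a multiple and means ``no fold''), but only at distance $\pi/\alpha$ from $\pi$ when $\alpha$ is odd. Summing $2\alpha$ half-extended-folds, whose lengths are free parameters because the turn-back points can sit anywhere along the rays, gives both bounds with no rearrangement and no tiling problem. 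Your diagram --- axis points on a circle, arcs as chords --- is a different (essentially dual) picture, not ``the spoked form straightened out'', and its fold angles $k\pi/\alpha$ can be as small as $\pi/\alpha$ for either parity, which is why you only recover the odd-type bound directly.

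The fatal gap is your even case. Re-placing the axis points at the roots of unity according to a $2$-colouring of the traversal cycle is a nontrivial permutation of the axis order, and the link type of the chord diagram depends on that order: it determines which chords cross and in what pattern. (The only permutations an arc-presentation tolerates are cyclic shifts and reversal of the axis points, and those do not change the $k$'s.) After your rearrangement there is no reason the diagram still represents $L$; indeed, placing the points in traversal order would produce a convex polygon with no crossings at all, i.e.\ an unknot. Two further gaps you partly flag yourself: first, the claim that the over/under data of the chord diagram is ``inherited from the presentation'' needs an actual isotopy argument, since arcs in distinct pages never cross in space and the crossing pattern of your chords differs from that of the standard grid-diagram flattening; second, with the vertices pinned to the roots of unity the edge lengths are forced, and they will not in general equal the sums of the two half-extended-fold lengths they must accommodate, so $\Rib(\lw)=\sum_{v}\Rib(\mathcal{D}^{(v)}_w)$ is not available --- the fold constraints only give $\Len(\lw)\geq\sum_{v}\Len(\mathcal{D}^{(v)})$, which is the wrong direction for an upper bound. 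The paper's spoked form dodges both problems because it is an honest projection of the presentation along the axis and the spoke lengths remain adjustable.
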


\begin{proof} To compute the ribbonlength we assume that the width $w=1$, and find the length of the link diagram. We use the {\em spoked form} of the arc-presentation for $L$. We can view the folded ribbon traveling along each spoke from the end to the center and out again. The angle between each consecutive spoke is $\nicefrac{2\pi}{\alpha}$.

The ribbonlength of one spoke (from the end to the center) is bounded below by the ribbonlength of {\em half} of an extended fold~$\mathcal{D}$ from angle $\theta$. Recall from Remark~\ref{rmk:fold-length} that
$$\Rib(\mathcal{D})=\begin{cases} \cot(\nicefrac{\theta}{2})
 & \text{ when $0<\theta\leq \nicefrac{\pi}{2}$,} \\
\cot(\nicefrac{\pi}{2}-\nicefrac{\theta}{2})& \text{ when $\nicefrac{\pi}{2}\leq \theta< \pi$.}
\end{cases}
$$
This function increases as $\theta\rightarrow 0$ and as $\theta\rightarrow \pi$. 
The smallest angle we need to worry about is $\theta=\nicefrac{2\pi}{\alpha}$. Here, $\Rib(\mathcal{D})= \cot(\nicefrac{\pi}{\alpha})$ and the ribbonlength of a single spoke is at most $\frac{1}{2}\cdot\cot(\nicefrac{\pi}{\alpha})$. 

We now consider the largest angle that $\theta$ can be. We first note that when the angle between the spokes is $\pi$, there is no fold, and the ribbon just goes straight across.  The largest angle for our fold angle $\theta$ then depends on whether $\alpha$ is even or odd.  When $\alpha$ is even, we find this is
$$\theta = \frac{2\pi}{\alpha}\left(\frac{\alpha}{2} - 1\right) = \pi\left(1-\frac{2}{\alpha}\right),
$$
and $\pi-\theta=\nicefrac{2\pi}{\alpha}$. Here, the ribbonlength of the spoke with this fold angle is  bounded below by $\frac{1}{2}\cdot\cot(\nicefrac{\pi}{\alpha})$, the same as the acute case. This leads us to deduce that when $\alpha$ is even, the ribbonlength of the folded ribbon link is at most
$$\Rib(\lw)\leq 2\alpha\cdot \frac{1}{2}\cdot\cot(\frac{\pi}{\alpha}) =\alpha \cot(\frac{\pi}{\alpha}).
$$

When $\alpha$ is odd, we find the largest fold angle is 
$$\theta = \frac{2\pi}{\alpha}\left(\frac{\alpha-1}{2}\right) = \pi\left(1-\frac{1}{\alpha}\right),
$$
and $\pi-\theta=\nicefrac{\pi}{\alpha}$. Here, the ribbonlength of the spoke with this fold angle is bounded below by $\frac{1}{2}\cdot\cot(\nicefrac{\pi}{2\alpha})$, which is larger than the acute case. For $\alpha$ odd, we deduce that the folded ribbonlength of the folded ribbon link is at most
$$\Rib(\lw)\leq 2\alpha\cdot \frac{1}{2}\cdot\cot(\frac{\pi}{2\alpha}) =\alpha \cot(\frac{\pi}{2\alpha}).
$$
\end{proof}

We are now ready to prove Theorem~\ref{thm:crossing-squared}: any non-split link type $L$  with arc index $\alpha(L)$ contains a folded ribbon link $\lw$ with 
$$\Rib(\lw)\leq \begin{cases} 0.32\Cr(L)^2 + 1.28\Cr(L) + 0.23 & \text{ when $\alpha(L)$ is even,} \\
0.64\Cr(L)^2+2.55Cr(L) + 2.03 & \text{ when $\alpha(L)$ is odd.}
\end{cases}
$$

\begin{proof}[Proof of Theorem~\ref{thm:crossing-squared}.] 
Assume $L$ has arc-index $\alpha(L)$. We know that $\alpha(L)\leq \Cr(L)+2$. Taylor's theorem says $\cot(x)=\frac{1}{x}-\frac{x}{3} - \frac{x^3}{45}- \dots$, and we will only use the first two terms in our approximation. We now combine these facts along with the formulas from  Proposition~\ref{prop:spoked}. For $\alpha(L)$ even we get
\begin{align*} \Rib(L)\leq \alpha\cot(\frac{\pi}{\alpha}) & \leq \frac{\alpha^2}{\pi} - \frac{\pi}{3}
\\ &\leq \frac{1}{\pi}(\Cr(L)+2)^2-  \frac{\pi}{3}
\\ & = \frac{1}{\pi}\Cr(L)^2 + \frac{4}{\pi}\Cr(L) + \frac{4}{\pi}-\frac{\pi}{3}
\\ & \leq 0.32\Cr(L)^2 + 1.28\Cr(L)+0.23.
\end{align*}
A similar computation for $\alpha(L)$ odd gives
\begin{align*} \Rib(L)\leq \alpha\cot(\frac{\pi}{2\alpha}) & \leq \frac{2}{\pi}\Cr(L)^2 + \frac{8}{\pi}\Cr(L) + \frac{8}{\pi}-\frac{\pi}{6}
\\ & \leq 0.64\Cr(L)^2 + 2.55\Cr(L)+2.03.
\end{align*}
In each case the decimal approximation was chosen to be greater than the constants in the equations.
\end{proof}

When we apply Theorem~\ref{thm:crossing-squared} to a knot $K$, we see this a huge improvement over Tian's \cite{Tian} bound of $\Rib(K)\leq 2(\Cr(K))^2 +6\Cr(K) + 4$.  

\begin{theorem} \label{thm:cr-sq-non-prime}
Any non-split link type $L$ with prime components $L_1, L_2, \dots, L_n$, contains a folded ribbon link $\lw$ with 
$$Rib(\lw)\leq 0.64\sum_{i=1}^n \Cr(L_i)^2 + 2.55\sum_{i=1}^n \Cr(L_i) + 2.03n.$$
\end{theorem}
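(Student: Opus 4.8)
The plan is to build the composite link one prime factor at a time and then splice the pieces together without lengthening the ribbon. First I would apply Theorem~\ref{thm:crossing-squared} to each prime component $L_i$ separately. Since the odd-index bound dominates the even-index bound for every value of the crossing number (one checks $0.64x^2+2.55x+2.03 \ge 0.32x^2+1.28x+0.23$ for all $x\ge 0$), each $L_i$ can be realized as an allowed folded ribbon $(L_i)_w$ with
$$\Rib((L_i)_w)\leq 0.64\Cr(L_i)^2 + 2.55\Cr(L_i) + 2.03,$$
regardless of the parity of $\alpha(L_i)$. This already explains the shape of the target inequality: the claimed bound is exactly $\sum_{i=1}^n\bigl(0.64\Cr(L_i)^2+2.55\Cr(L_i)+2.03\bigr)$, so the connect-sum step must contribute no net length.

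Next I would assemble these into a single diagram for the iterated connect sum $L=L_1\#L_2\#\cdots\#L_n$. I would arrange the $n$ spoked diagrams disjointly in the plane, all with common width $w$, and form each connect sum by the usual diagrammatic operation: on an outermost (hence accessible) spoke-end of $(L_i)_w$ and of $(L_{i+1})_w$, delete a short sub-arc from each centerline and bridge the free ends with two short parallel arcs carrying the ribbon across. Recall from the proof of Proposition~\ref{prop:spoked} that in the spoked form the ribbon travels from the outer end of a spoke into the center and back out, so the outer ends are precisely the uncrowded, accessible places to attach such a band. Placing the two diagrams so that the deleted arcs lie adjacent and parallel, the bridging arcs can be taken as short as we like; choosing the bridge length no greater than the deleted length, the connect sum does not increase the total length. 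Iterating over $i=1,\dots,n-1$ would then yield an allowed folded ribbon $L_w$ for $L$ with
$$\Rib(L_w)\leq \sum_{i=1}^n \Rib((L_i)_w)\leq 0.64\sum_{i=1}^n\Cr(L_i)^2+2.55\sum_{i=1}^n\Cr(L_i)+2.03n,$$
which is the desired inequality. I would deliberately avoid trying to express the bound in terms of $\Cr(L)$ directly, since crossing number is not known to be additive under connect sum; working factor-by-factor is exactly what lets the statement be phrased with $\sum_i\Cr(L_i)$.

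The hard part will be verifying that this splicing produces an \emph{allowed} folded ribbon while adding no length. Concretely, I must exhibit in each spoked diagram an edge whose ribbon boundary is free on one side so the connecting band can be attached; I must check that the bridging arcs and their fold lines stay disjoint from the rest of the ribbon and from one another, so that condition (1) of the allowed definition (immersion, disjoint fold lines) survives; and I must assign crossing information along the bridge consistently with both summands, which is condition (2). Because the bridge can be taken straight and arbitrarily short, its two fold lines at the splice points have fold angle near $\pi$ and contribute negligible length, so the length bookkeeping closes cleanly; the genuine care is in the planar placement guaranteeing the band meets no other part of the ribbon and realizes the connect-sum knot type.
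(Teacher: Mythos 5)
Your proposal is correct and follows essentially the same route as the paper: apply Theorem~\ref{thm:crossing-squared} to each prime component using the larger (odd-$\alpha$) bound uniformly, then show the connected sum of the spoked forms can be performed without increasing total ribbonlength. The paper's only difference is in the splicing mechanics—it replaces the topmost arc of one spoked diagram and the bottommost arc of the next by straight chords (shorter by the triangle inequality), aligns the chords collinearly with a shared endpoint, deletes them, and reconnects with a single segment of length equal to the difference of the chord lengths—but this is the same length-non-increasing connect-sum idea you describe.
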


\begin{proof} Assume that we have found minimal arc-presentations of all of the component links. Then arrange the component links in their spoked form and use the bounds on ribbonlength as in Theorem~\ref{thm:crossing-squared}.  We will show that for any prime links $L_1$ and $L_2$ we can arrange $L_1\#L_2$ so that 
$$\Rib(L_1\#L_2)\leq \Rib(L_1)+\Rib(L_2).$$
 To complete the proof, we apply the larger of the two bounds from Theorem~\ref{thm:crossing-squared} to each $L_i$ and then sum them up.

We start by arranging that the spoked form of $L_1$ is placed below the spoked form of $L_2$. The topmost arc of $L_1$, respectively the bottom arc of $L_2$, is a path from one end point of a spoke to the center then out to an end point of a different spoke. We replace each of these arcs by a straight line segment of length $\ell_1$ and $\ell_2$ respectively. By the triangle inequality, these straight line segments have length less than or equal to the spokes they replace. See Figure~\ref{fig:Link-Sum}.

Now rotate and translate $L_2$ so the straight line segments are collinear and share one endpoint. To complete that construction of $L_1\#L_2$, we remove the straight line segments. We then join $L_1$ and $L_2$ at the shared end point and by a new straight line segment of length $|\ell_1-\ell_2|$ between the remaining end points of the removed straight line segments. At each step,  observe that we reduce the ribbonlength. 
\end{proof}

\begin{center}
\begin{figure}[htbp]
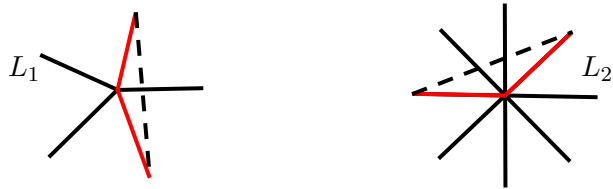

\begin{overpic}{Link-Sum}
\put(10,15){$L_1$}
\put(85,15){$L_2$}
\end{overpic}
\caption{The top (respectively bottom) pairs of spokes of $L_1$ (resp. $L_2$)  are shown in red. In the first step of constructing $L_1\# L_2$, these spokes are replaced by the dotted straight-line segments.}
\label{fig:Link-Sum}
\end{figure}
\end{center}

Recall that Theorems~\ref{thm:crossing-squared} and \ref{thm:cr-sq-non-prime} were heavily inspired by the results of Cantarella {\em et. al} \cite{CFM} relating ropelength to crossing number. We note that other authors \cite{HKON,HNO} improved the coefficients in the ropelength case even further by embedding the knot in the cubic lattice. We have chosen not to apply their results to this case, since we have discovered an even better bound for ribbonlength as shown in the next section.


\section{Hamiltonian knot projections, ribbonlength and crossing number}
\label{sect:Hamiltonian}

In this section we use a construction by Diao, Ernst and Yu~\cite{DEY}, in order to show that the folded ribbonlength of any knot or link $K$ is bounded above by a multiple of $\Cr(K)^{3/2}$.  The key result of~\cite{DEY}  is the proof that any knot or link can be embedded the cubic lattice in a particular way.  To get our bounds on folded ribbonlength in terms of crossing number, we simply take this cubic lattice embedding of a knot and project to the $xz$-plane. While this idea is very simple, we will spend time explaining the notation and key steps of the construction in \cite{DEY} to allow the reader to easily understand where our formulas come from.

\subsection{Notation} We start by reminding the reader that the {\em cubic lattice} is the three dimensional grid $(\Z\times\R\times\R)\cup(\R\times\Z\times\R)\cup (\R\times\R\times \Z)$. The {\em planar lattice} is the two dimensional grid  $(\Z\times\R)\cup(\R\times\Z)$. We will eventually measure the length of paths in the cubic lattice and in the planar lattice. We will use $ax$ (where $a\in \N$) to mean we have taken a total of $a$ steps in the $x$ direction. Unlike~\cite{DEY}, we won't keep track of steps in the positive or negative direction, just the total number of steps taken. So $2x$ could mean 2 steps in the same $x$-direction, or one step in the positive and one step in the negative $x$-direction. Similarly, we will use $ay$ (and $az$) for the total of $a$ steps in the $y$-direction (and $z$-direction).  In the work that follows, we will need to keep track of these three numbers for a path, as well as the total length of a path.
\begin{definition} \label{def:step}
Suppose $\mathcal{P}$ is a path in the cubic lattice and the total number of steps taken in $x$, $y$ and $z$ directions is $ax+by+cz$.  We define the {\em step number} of $\mathcal{P}$ to be $\Step(\mathcal{P})=ax+by+cz$. The total length of $\mathcal{P}$ in the cubic lattice is then $\Len(\mathcal{P})=a+b+c$.
\end{definition}

We now review  notation and background from \cite{DEY}.
Given any knot or link $K$, we can find a reduced\footnote{A {\em reduced} knot diagram contains no reducible crossings. A crossing is {\em reducible} if there exists a circle in the projection plane meeting the crossing transversely, but not meeting the knot diagram in any other point.}, regular projection of $K$ denoted $\pr(K)$.  This projection can be viewed as a 4-regular plane graph\footnote{A {\em 4-regular plane graph} is a planar graph where all vertices have degree 4.}  $G$, where we have treated the crossings of $\pr(K)$ as vertices, and the arcs of $\pr(K)$ joining these crossings as edges. In this case, we say that $G$ is an {\em RP-graph of $K$}. If the projection $\pr(K)$ has $\Cr(K)$ number of crossings, then we say $G$ is a {\em minimal RP-graph} of $K$.  We let $V(G)$ denote the set of vertices of $G$, and $E(G)$ denote the set of edges of $G$. Recall that a {\em Hamiltonian cycle} in a graph $G$ is a cycle that contains all vertices of $G$. We say a knot $K$ is {\em Hamiltonian} if there  is a knot $K'$ equivalent to $K$ such that $K'$ has a Hamiltonian RP-graph.  Diao {\em et al.}
then prove that

\begin{theorem}[\cite{DEY} Theorem 3.6] \label{thm:Hamiltonian}
Every knot or link $K$ admits a Hamiltonian RP-graph with at most $4\cdot\Cr(K)$ vertices.
\end{theorem}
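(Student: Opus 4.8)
The plan is to start from a minimal diagram of $K$ and to rebuild it, by isotopy, into an equivalent diagram whose shadow is Hamiltonian, while keeping careful track of how many crossings the rebuilding creates. First I would fix a reduced minimal projection, so that its RP-graph $G$ is a connected $4$-regular plane graph with exactly $c := \Cr(K)$ vertices, $2c$ edges, and (by Euler's formula) $c+2$ faces. The conceptual point to isolate up front is that $G$ is typically \emph{not} Hamiltonian: the knot itself runs along an Euler circuit of $G$, visiting each crossing twice and using all four incident edges, whereas a Hamiltonian cycle must visit each crossing once, using only two of its four edges. So the content of the theorem is genuinely to produce an equivalent $K'$ whose shadow carries a Hamiltonian cycle. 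Since any two diagrams of $K$ differ by Reidemeister moves, and each such move merely inserts new $4$-valent vertices (one per R1 kink, two per R2 poke) without changing the knot type, the task reduces to a graph-theoretic problem with a budget: beginning from $c$ vertices, I may insert at most $3c$ further crossings, and I must arrange that the enlarged shadow is Hamiltonian.

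The approach would then be local. A Hamiltonian cycle amounts to selecting, at each crossing, two of the four incident edges so that the chosen edges form a single cycle through all vertices; such a selection need not exist in $G$. I would create the missing freedom by replacing each crossing with a small tangle gadget, drawn in a disk meeting its four incident edges, that is tangle-equivalent to the single crossing it replaces (so the knot type is preserved) but that contains a few extra crossings through which a spanning cycle can be rerouted. The constant $4$ in the bound $4\Cr(K)$ strongly suggests that each gadget should contribute at most four crossings. Guided by the planar embedding of $G$ --- for instance by a checkerboard coloring of the $c+2$ faces, or by a spanning structure of the associated black/white graph --- I would then prescribe, consistently around the whole diagram, which edges the prospective cycle uses at each gadget, so that these choices close up into a single loop visiting every vertex of the enlarged shadow exactly once.

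The hard part, where I expect essentially all of the difficulty to live, is the simultaneous control of three constraints. I must show that the chosen edge-pairs really assemble into \emph{one} cycle rather than a disjoint union of several (that is, a genuine Hamiltonian cycle, not merely a $2$-factor); that the gadget realizing each crossing is honestly a tangle isotopy, so that the output is a diagram of $K$; and that the crossing count lands at most at $4c = c + 3c$. I would expect to establish the single-cycle property and the exact constant together, by an Euler-characteristic and face-counting argument on the modified $4$-regular plane graph, showing that each original crossing and each of the $c+2$ faces contributes a bounded, explicitly countable number of new vertices. Designing one gadget that meets all three requirements at once is the crux; once a Hamiltonian shadow with at most $4\Cr(K)$ vertices is in hand, the cubic-lattice embedding and length estimates used in the rest of the paper should follow by comparatively routine bookkeeping.
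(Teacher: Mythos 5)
First, a point of reference: the paper does not prove this statement at all --- it is quoted verbatim as Theorem~3.6 of \cite{DEY} and used as a black box, so there is no internal proof to compare yours against. Your proposal therefore has to stand on its own as a proof of the cited result, and as written it does not: it is an accurate framing of the problem together with an honest list of the obstacles, but the construction that would constitute the proof is never given. You correctly observe that the RP-graph of a minimal diagram is generally not Hamiltonian (the knot traces an Euler circuit, not a Hamiltonian cycle), that one must pass to an equivalent diagram with extra crossings, and that the budget is $3\Cr(K)$ additional vertices. But the three things you then identify as ``the hard part'' --- exhibiting a concrete local or global modification, proving the selected edge-pairs close up into a \emph{single} spanning cycle rather than a disconnected $2$-factor, and verifying the count $4\Cr(K)$ --- are exactly the content of the theorem, and you defer all three. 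Saying ``designing one gadget that meets all three requirements at once is the crux'' is a correct diagnosis, not a proof.

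There is also a structural reason to doubt that the purely local gadget strategy, as stated, can be completed without a genuinely global ingredient. Any rule that replaces each crossing by a bounded tangle and selects two of the four incident edge-germs at each site produces, at best, a $2$-factor of the enlarged shadow; whether that $2$-factor is connected is a global property of the planar embedding, and no checkerboard coloring or face count alone certifies it. One would need an explicit global device --- for instance, a spanning-tree-driven sequence of Reidemeister~II insertions that absorbs the vertices one at a time into a growing cycle, with the tree structure both guaranteeing connectivity and supplying the crossing count --- and that device is precisely what is missing. Until the modification is written down, shown to be realized by Reidemeister moves (so the knot type is preserved), shown to yield a single Hamiltonian cycle, and shown to add at most $3\Cr(K)$ crossings, this remains a plan rather than a proof; for the actual argument the reader must consult \cite{DEY}.
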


Before we go any further, we outline our method (inspired by \cite{DEY}) of  finding bounds on folded ribbonlength.
\begin{compactdesc}
\item[Stage 1] Given any knot or link $K$, we find its Hamiltonian RP-graph $G$.
\item[Stage 2] We embed $G$ into the cubic lattice, and denote this embedding by $F$. (Diao {\em et al.} \cite {DEY} prove that their construction guarantees that $F$ is ambient isotopic to $G$.)
\item[Stage 3] We modify the embedded graph $F$ to give an embedded knot in the cubic lattice which is equivalent to $K$.
\item[Stage 4] We project the embedded knot in the cubic lattice to the $xz$-plane to give a polygonal knot diagram.
\item[Stage 5] We use the polygonal diagram as the core-curve of a folded ribbon knot of width $w=1$.
\end{compactdesc}

We have discussed Stage 1 above. We will describe Stage 2 in detail in Section~\ref{sect:embed-graph}, and Stages 3, 4 and 5 will be described in Section~\ref{sect:project}.

Before we get into the details of Stage 2, we need to introduce some more notation (following \cite{DEY} Notation 4.2). Let $G$ be a Hamiltonian RP-graph of a knot or link $K$ in the plane $z=0$, and let $C$ be a Hamiltonian cycle in $G$.  Let $n=|V(G)|$ be the number of vertices of $G$. We let $v_1, v_2, \dots, v_n$ represent a cyclic order of the vertices of $G$ as we walk along the Hamiltonian cycle $C$. We also let $k=\lceil \sqrt{n} \rceil$. 

Observe that the Hamiltonian cycle $C$ is a simple closed curve in the plane, so divides the plane into a bounded and unbounded region. The edges in the set $E(G)\setminus E(C)$ are divided into two groups. Those in the bounded regions called {\em B-edges}, and those in the unbounded region called {\em U-edges}.  In Section~\ref{sect:embed-graph} we will first embed $C$ in the cubic lattice, and then embed the $U$- and $B$-edges into the cubic lattice.

Exactly how the $U$- and $B$-edges are arranged depends on the other edges in $E(G)\setminus E(C)$.  Recall that $G$ is a 4-regular graph from a reduced, regular knot projection. Hence, at each vertex $v_\ell$, there are 2 distinct edges of $G$ incident with $v_\ell$ which are not on $C$.  Following the notation in \cite{DEY} Section 4.5, we say $v_\ell$ is of {\em type} (a) if one edge is a $U$-edge and one edge is a $B$-edge. We say $v_\ell$ is of {\em type} (b) if these two edges are both $U$-edges. We say $v_\ell$ is of {\em type} (c)  if these two edges are both $B$-edges.

\subsection{Embed Hamiltonian RP-graphs in the cubic lattice}\label{sect:embed-graph}

In this section we describe Stage 2: how to embed $G$, the Hamiltonian RP-graph of $K$, into the cubic lattice. We do this in in 3 steps.  The first step is to embed the vertices of $G$ in the cubic lattice in the plane $z=0$.  Informally, we embed the vertices of $G$ into a square region by snaking the vertices to and fro as shown in Figure~\ref{fig:Hamiltonian-cycle}. More formally, following \cite{DEY} Section 4.4, and we embed $v_1, \dots v_k$ in this order along the $y$-axis starting with $v_1$ at $(0,3,0)$ and increasing the $y$-coordinate 3 units at a time, ending with $v_k$ at $(0,3k,0)$. We then embed $v_{k+1}, \dots, v_{2k}$ on the line determined by $x=3, z=0$ by starting with $v_{k+1}$ at $(3,3k,0)$ and decreasing the $y$-coordinate 3 units at a time, ending with $v_{2k}$ at $(3,3,0)$. This construction is repeated until all vertices are arranged. It is possible that the $(k-1)$st column does not contain any vertices of $G$, for example when $n=6$, or $n=11$.

\begin{figure}[htbp]
\begin{overpic}[scale=1.5]{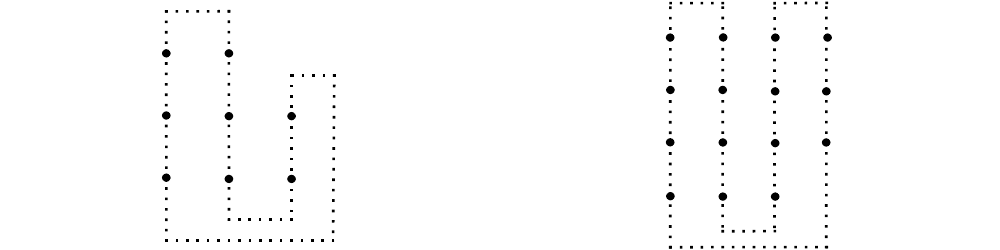}
\put(13.5,6){$v_1$}
\put(13.5,13){$v_2$}
\put(13.5,19){$v_3$}
\put(20,19){$v_4$}
\put(20,13){$v_5$}
\put(20,6){$v_6$}
\put(26,6){$v_7$}
\put(26,13){$v_8$}
\put(64,4.5){$v_1$}
\put(64,10){$v_2$}
\put(64,15){$v_3$}
\put(64,20.5){$v_4$}
\put(69.25,20.5){$v_5$}
\put(69.25,15){$v_6$}
\put(69.25,10){$v_7$}
\put(69.25,4.5){$v_8$}
\put(74.5,4.5){$v_9$}
\put(73.75,10){$v_{10}$}
\put(73.75,15){$v_{11}$}
\put(73.75,20){$v_{12}$}
\put(79,20){$v_{13}$}
\put(79,15){$v_{14}$}
\put(79,10){$v_{15}$}
\end{overpic}
\caption{The embedding of the vertices of the Hamiltonian cycle $C$ in the $z=0$ plane for $n=8$ and $n=15$.}
\label{fig:Hamiltonian-cycle}
\end{figure}

The second step of the embedding of $G$ into the cubic lattice is to embed the edges of $C$ in the cubic lattice between the planes $z=-1$ and $z=1$. The following discussion summarizes \cite{DEY} Section 4.5. We first connect the vertices $v_\ell$ to $v_{\ell+1}$ for $\ell = 1, 2,\dots, n-1$. The possible paths taken are shown in \cite{DEY} Figure~13. In each case, we will take the longest possible path between the vertices. The longest path occurs when vertex $v_\ell$ is of type (b) or (c), and so has an extra 2 steps in the $z$ direction. We first take those paths between consecutive vertices connected by steps only in the $y$-direction (for example from $v_1$ to $v_2$, or from $v_2$ to $v_3$). Such paths have step number at most $3y+2z$.  If we fix $k$ and consider possible values for $n$, then for $(k-1)^2<n\leq k^2-k$, there are $(n-1)-(k-2)=n-k+1$ such paths, and for $k^2-k+1\leq n\leq k^2$ there are $(n-1) - (k-1)=n-k$ such paths.  Next, we note there are at most $(k-1)$-paths connecting vertices which are connected with steps in the $x$ direction. The step number here is at most $3x+4y+2z$. (Notice that these paths travel in the $x$-direction either along the line given by $y=1$, $z=0$, or the line $y=3k+2$, $z=0$.)

The final path to consider is the path connecting $v_n$ to $v_1$.   Observe that by definition of $k$, we have $n=i\cdot k + j$, for some $k-2\leq i \leq k-1$, and $1\leq j\leq k$. The shape of the path from  $v_n$ to $v_1$ depends on whether $i$ is even or odd. It turns out that the longest lattice path occurs when $i$ is even and $v_n$ is of type (b) or (c), so we restrict our attention to that case.  An example of the outline of this path is given in Figure~\ref{fig:Hamiltonian-cycle} (left). This path connects $v_n$ at $(3i,3j,0)$ to $v_1$ at $(0,3,0)$ (and travels the longest distance in the $x$-direction along the $x$-axis). In order to clarify \cite{DEY} Section 4.5, we see the path passes through these vertices in order: $(3i,3j,0)$, $(3i,3j,\pm 1)$, $(3i, 3j+2,\pm 1)$, $(3i,3j+2,0)$, $(3i+2, 3j+2,0)$, $(3i+2,0,0)$, $(0,0,0)$ then $(0,3,0)$.   Using the bounds from the definition of $i$ and $j$, we find this gives us a step number of at most 
\begin{align*}(3i+4)x + (3j+7)y + 2z & \leq [(3(k-1)+4]x + (3k+7) y +2z
\\ & = (3k+1)x +(3k+7)y + 2z.
\end{align*}

We now add together all the different sub-paths in the embedded Hamiltonian cycle $C$ to find that the step number of $C$ is at most
\begin{align*} \Step(C) & \leq (n-k+1)(3y+2z) + (k-1)(3x+4y+2z) + (3k+1)x +(3k+7)y + 2z
\\ & = (3k-3 +3k+1)x + (3n -3k + 3+  4k-4 + 3k+7)y + (2n-2k + 2 + 2k-2 +2)z
\\ & = (6k-2)x + (3n +4k +6)y + (2n+2)z.
\end{align*}

In the Remark in Section 4.5 in \cite{DEY}, Diao {\em et al.} bound the lattice length of $C$ from above: $\Len(C)\leq 5n+ 11k$. They then use this bound in their later equations for lattice length and ropelength. We will keep our bound on the step number of $C$ unchanged,  so the coefficients in our equations later on will be a little different from those in \cite{DEY}.

The third step of the embedding of $G$ into the cubic lattice is to embed the $U$-edges of $G$ into the half-space $z\geq 0$ and the $B$-edges of $G$ into the half-space $z\leq 0$. This requires many careful arguments, and the embedding of $U$-edges is completely described in \cite{DEY} Sections 4.6 through 4.10. The construction for $B$ edges is completely analogous and so is omitted.

As before, we are looking for the embedding of a U-edge which gives the longest possible lattice length. Let $e$ be a $U$-edge which starts in column $i$ and ends in column $j$. Following \cite{DEY}, we define $J(e)=|i-j|$ to be the {\em jump number of $e$}. Diao {\em et. al} discuss 3 cases: the first where $J(e)=0$ and the end points of $e$ are in the same column (\cite{DEY} Section 4.7), the second where $J(e)=1$ (\cite{DEY} Section 4.8), and the third where $2\leq J(e)\leq k-1$ (\cite{DEY} Section 4.10). It turns out that the longest possible $U$-edge comes from the third case. Many things need to be considered, including the number of and way that other edges of $U$ are nested between the edge $e$ and the Hamiltonian cycle $C$.   The path is completely described in \cite{DEY} Section 4.10 with moves numbered (0) through (12). In this construction, we see that there are at most 
\begin{compactitem}
\item $3(k-1)=3k-3$ $x$-steps from moves (2), (4), (6), (8), (10);
\item $2(4k)+2 = 8k+2$ $y$-steps from moves (0), (3), (9), (12);
\item $2(3k-1)=6k-2$ $z$-steps from moves (1), (5), (7), (11).
\end{compactitem}
Thus any individual $U$- or $B$-edge $e$ has step number at most 
$$ \Step(e) \leq (3k-3)x +(8k+2)y +(6k-2)z.$$

We recall from the definition of the Hamiltonian cycle $C$, the total number of $U$- and $B$-edges is $n$.
Following \cite{DEY}, we let $F$ denote the embedding of the Hamiltonian RP-graph $G$ in the cubic lattice. To summarize, $F$ is the union of the embedding of the Hamiltonian cycle $C$,  and the embedding of all of the $U$- and $B$-edges in the cubic lattice.  In \cite{DEY} Theorem 4.13, Diao {\em et al.} prove that the lattice graph $F$ is ambient isotopic to the RP-graph $G$ of $K$.

We can now give a more detailed version of \cite{DEY} Theorem 4.14. (Note that our coefficients end up being a little different since we do not bound $\Step(C)$ from above as in \cite{DEY}.)

\begin{theorem} \label{thm:graph-length}
Let $G$ be a Hamiltonian RP-graph with $n$ vertices, then $G$ can be embedded onto a lattice graph $F$ such that  
$$\Step(F) \leq (3n\lceil \sqrt{n}\rceil -3n +6\lceil \sqrt{n}\rceil -2) x + (8n\lceil \sqrt{n}\rceil+5n+4\lceil \sqrt{n}\rceil +6)y + ( 6n\lceil \sqrt{n}\rceil + 2)z,$$
and 
$$\Len(F) \leq 17n\lceil \sqrt{n}\rceil +2n +10\lceil \sqrt{n}\rceil +6.$$
\end{theorem}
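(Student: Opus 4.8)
The plan is to assemble $\Step(F)$ by summing the step numbers of its constituent pieces, since $F$ is by construction the edge-disjoint union of the embedded Hamiltonian cycle $C$ together with the $n$ embedded $U$- and $B$-edges. Because the step number in each coordinate direction literally counts unit steps, it is additive over an edge-disjoint collection of lattice paths, so
$$\Step(F) = \Step(C) + \sum_{e} \Step(e),$$
where the sum runs over all $n$ of the $U$- and $B$-edges (recall $E(G)$ is the disjoint union of $E(C)$ and the $U$- and $B$-edges). Writing $k=\lceil\sqrt{n}\rceil$ throughout, I would then substitute the two bounds already established in Section~\ref{sect:embed-graph}: the bound $\Step(C)\leq (6k-2)x + (3n+4k+6)y + (2n+2)z$ for the cycle, and the uniform per-edge bound $\Step(e)\leq (3k-3)x + (8k+2)y + (6k-2)z$, which holds for every $U$- or $B$-edge (the worst case being $2\leq J(e)\leq k-1$). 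Since this per-edge bound is uniform, the contribution of the $n$ edges is at most $n$ times it.

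Next I would add these coordinate-wise. The $x$-coefficient becomes $(6k-2)+n(3k-3)=3nk-3n+6k-2$; the $y$-coefficient becomes $(3n+4k+6)+n(8k+2)=8nk+5n+4k+6$; and the $z$-coefficient becomes $(2n+2)+n(6k-2)=6nk+2$. Substituting $k=\lceil\sqrt{n}\rceil$ yields exactly the claimed bound on $\Step(F)$.

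Finally, for the length bound I would use the fact (Definition~\ref{def:step}) that $\Len(F)=a+b+c$ when $\Step(F)=ax+by+cz$; that is, I would simply add the three coefficients just computed, giving $\Len(F)\leq 17nk+2n+10k+6$, and substituting $k=\lceil\sqrt{n}\rceil$ produces the stated inequality.

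There is no genuine conceptual obstacle here: all of the geometric work has already been carried out in deriving the per-piece bounds on $\Step(C)$ and $\Step(e)$ in Section~\ref{sect:embed-graph}. The only points requiring care are (i) confirming that the step number is genuinely additive over the edge-disjoint decomposition of $F$, which it is since the pieces share no lattice edges and together exhaust $F$, and (ii) that the per-edge bound applies \emph{uniformly}, so that multiplying by $n$ is legitimate rather than having to track each edge's individual jump number. Everything else is coordinate-wise bookkeeping.
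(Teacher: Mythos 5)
Your proposal is correct and follows exactly the same route as the paper: the author likewise adds the bound on $\Step(C)$ to $n$ times the uniform per-edge bound $(3k-3)x+(8k+2)y+(6k-2)z$, simplifies coordinate-wise to the stated expression, and obtains $\Len(F)$ by setting $x=y=z=1$ (equivalently, summing the three coefficients). Your added remarks on additivity over the edge-disjoint decomposition and on the uniformity of the per-edge bound are correct but are left implicit in the paper.
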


\begin{proof} In order to get an upper bound on the total number of steps for $F$, we combine the step bounds on the Hamiltonian cycle $C$, and all of the $U$- and $B$-edges. We also recall that $k=\lceil \sqrt{n}\rceil$.
\begin{align*} \Step(F) & \leq (6k-2)x + (3n +4k +6)y + (2n+2)z + n[(3k-3)x +(8k+2)y +(6k-2)z]
\\ & = (6k-2 +3nk -3n)x + (3n +4k +6 + 8nk + 2n)y + (2n +2 + 6nk-2n)z 
\\ & = (3nk -3n+6k -2)x + (8nk+ 5n + 4k +6)y + (6nk+2)z
\\ & = (3n\lceil \sqrt{n}\rceil -3n +6\lceil \sqrt{n}\rceil -2) x + (8n\lceil \sqrt{n}\rceil+5n+4\lceil \sqrt{n}\rceil +6)y + ( 6n\lceil \sqrt{n}\rceil + 2)z
\end{align*}
Set $x=y=z=1$ in $\Step(F)$ to get the length bound.
\end{proof}

\subsection{Immerse the knot or link in the planar lattice}\label{sect:project}

In the previous two sections we saw how to take a knot or link $K$, find a Hamiltonian RP-graph $G$, and embed this graph into the cubic lattice. We denoted this embedding by $F$.  We now take $F$ and make changes to it to get bounds on folded ribbonlength of knots. 

We now proceed to Stage 3 of our method: embedding a knot or link $K$ into the cubic lattice. We do this by taking the embedded graph $F$ just constructed and resolving the vertices so there are two distinct strands. To do this we follow the argument given in \cite{DEY} Theorem 5.1. At each vertex $v_\ell$, there are two edges incident to $v_\ell$ along the $y$-direction. We move these two edges one unit in the $x$-direction (either to the left of right) as shown in \cite{DEY} Figure 18.  The edges incident to $v_\ell$ in the vertical $z$-direction are unchanged. This gives an embedded curve $K'$ in the cubic lattice.  At each vertex we can choose to move the edges to the left or right in such a way so that $K'$ is equivalent to the original knot $K$.  We now recover a more detailed version of \cite{DEY} Theorem 5.1 (where again our coefficients end up being a little different due to the bounds on $\Step(C)$). 

\begin{theorem} \label{thm:cubic-length}
Let $K$ be a knot or a link, and assume $K$ has a Hamiltonian RP-graph $G$ with $n$ vertices. Then we can embed $K$ into the cubic lattice such that 
$$\Step(K) \leq  (3n\lceil \sqrt{n}\rceil -n +6\lceil \sqrt{n}\rceil -2) x + (8n\lceil \sqrt{n}\rceil+5n+4\lceil \sqrt{n}\rceil +6)y + ( 6n\lceil \sqrt{n}\rceil + 2)z,$$
and 
$$\Len(K) \leq 17n\lceil \sqrt{n}\rceil + 4n +10\lceil \sqrt{n}\rceil +6.$$
\end{theorem}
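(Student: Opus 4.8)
The plan is to begin from the lattice embedding $F$ of the Hamiltonian RP-graph $G$ supplied by Theorem~\ref{thm:graph-length}, and then carry out Stage~3: resolve each of the $n$ vertices of $F$ into two disjoint strands, so that the resulting lattice curve is a genuine embedding of $K$ rather than the $4$-valent graph $G$. Following the resolution recipe of \cite{DEY} Theorem~5.1 (their Figure~18), at each vertex $v_\ell$ I would push each of the two edges leaving $v_\ell$ in the $y$-direction (the two cycle edges through $v_\ell$) one unit in the $x$-direction, leaving the two edges leaving in the $z$-direction (the incident $U$- or $B$-edge fragments) in place. The left/right choice at each vertex is a free parameter; rather than re-derive it, I would invoke \cite{DEY}, which proves these choices can be made so that the resolved curve $K'$ is ambient isotopic to $K$. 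This is the one genuinely topological ingredient, and it is already established.

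The heart of the estimate is then pure bookkeeping on $\Step$. Relative to $F$, the only change introduced at a single vertex is the $x$-shift of its two $y$-direction edge fragments; each such shift contributes exactly one extra unit step in the $x$-direction (the jog needed to offset the strand by one lattice unit), so one resolution adds $2x$ to the step number. Crucially, no new $y$- or $z$-steps are created, and the $z$-direction edges at the vertex are untouched, since the two resolved strands already separate in the independent $z$-direction. Resolving all $n$ vertices therefore adds $2n\,x$ in total. Adding this to the bound for $\Step(F)$ from Theorem~\ref{thm:graph-length} changes only the $x$-coefficient, from $3n\lceil\sqrt n\rceil-3n+6\lceil\sqrt n\rceil-2$ to $3n\lceil\sqrt n\rceil-n+6\lceil\sqrt n\rceil-2$, while the $y$- and $z$-coefficients are inherited verbatim, which is exactly the claimed bound on $\Step(K)$.

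The length bound then follows immediately by setting $x=y=z=1$ in $\Step(K)$, as in the proof of Theorem~\ref{thm:graph-length}; equivalently $\Len(K)=\Len(F)+2n=(17n\lceil\sqrt n\rceil+2n+10\lceil\sqrt n\rceil+6)+2n=17n\lceil\sqrt n\rceil+4n+10\lceil\sqrt n\rceil+6$, matching the statement. I expect the main obstacle to be not the arithmetic but the verification that each resolution really costs only two $x$-steps and zero steps in the other two directions, i.e. that the local picture of \cite{DEY} Figure~18 can be realized inside the already-embedded $F$ without forcing extra detours in $y$ or $z$ to keep the strands disjoint. Once one is convinced that the offset can always be absorbed purely in the $x$-direction, the computation is forced and the two displayed bounds follow.
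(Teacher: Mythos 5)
Your proposal is correct and follows essentially the same route as the paper: both start from the embedding $F$ of Theorem~\ref{thm:graph-length}, resolve each of the $n$ vertices by shifting the two $y$-direction edges one unit in the $x$-direction as in \cite{DEY} Theorem~5.1, count an extra $2n$ $x$-steps, and obtain the length bound by setting $x=y=z=1$. Your version is somewhat more explicit than the paper's two-sentence proof about why the cost is exactly $2x$ per vertex, but the argument is the same.
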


\begin{proof} Resolving each vertex as described above adds an extra $2x$-steps. In other words, the embedded cubic lattice path $K'$ equivalent to $K$ has an extra  $(2n)x$-steps in comparison with $F$ in Theorem~\ref{thm:graph-length}.
\end{proof}

We are finally ready for Stages 4 and 5. At this point our discussion departs from the work in~\cite{DEY}. We project the embedding of $K$ in the cubic lattice to the $xy$-, or $yz$-, or $xz$-plane in order to get a polygonal knot diagram in the planar lattice. The height of the edges in the direction of projection gives the crossing information for the corresponding knot diagram. Any knot diagram in the planar lattice gives a corresponding folded ribbon knot with width $w=1$. This means that the length of the polygonal knot diagram in the planar lattice also gives the folded  ribbonlength. When we look at the embedding of $K$ in the cubic lattice in Theorem~\ref{thm:cubic-length},  it turns out that projecting onto the $xz$-plane gives the shortest length in the planar lattice (and hence shortest ribbonlength). We thus deduce the following results.

\begin{theorem} \label{thm:lattice-length}
Let $K$ be a knot or a link, and assume $K$ has a Hamiltonian RP-graph $G$ with $n$ vertices. Then $K$ has a polygonal knot diagram in the planar lattice with length at most
$$  \Len(K) \leq 9n^{3/2} +8n+6\sqrt{n} +6.$$
\end{theorem}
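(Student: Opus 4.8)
The plan is to take the cubic lattice embedding of $K$ produced in Theorem~\ref{thm:cubic-length} and read off the length of its image under orthogonal projection to the $xz$-plane. Under such a projection every unit step in the $x$-direction and every unit step in the $z$-direction maps to a unit segment of the planar lattice, while every unit step in the $y$-direction collapses to a point and contributes nothing to the length. Consequently the length of the projected polygonal diagram equals the number of $x$-steps plus the number of $z$-steps of the cubic embedding; that is, it equals the sum of the $x$- and $z$-coefficients of $\Step(K)$.

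First I would justify the choice of the $xz$-plane. Projecting to the $xy$-, $yz$-, or $xz$-plane annihilates the $z$-, $x$-, or $y$-steps respectively, so the three candidate planar lengths are obtained by deleting one of the three coefficients in the bound for $\Step(K)$. Comparing the coefficients from Theorem~\ref{thm:cubic-length}, the $y$-coefficient $8n\lceil\sqrt{n}\rceil + 5n + 4\lceil\sqrt{n}\rceil + 6$ dominates the other two, so deleting it---that is, projecting to the $xz$-plane---produces the shortest diagram. I would also note that the $y$-coordinate of each edge records the over/under data, so the image is a genuine polygonal knot diagram of $K$ in the planar lattice, inheriting its knot type from the equivalence $K'\simeq K$ established in the construction preceding Theorem~\ref{thm:cubic-length}.

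Next I would carry out the bookkeeping. Adding the $x$- and $z$-coefficients gives
$$(3n\lceil\sqrt{n}\rceil - n + 6\lceil\sqrt{n}\rceil - 2) + (6n\lceil\sqrt{n}\rceil + 2) = 9n\lceil\sqrt{n}\rceil - n + 6\lceil\sqrt{n}\rceil.$$
Then I would apply the elementary estimate $\lceil\sqrt{n}\rceil \leq \sqrt{n} + 1$ to each occurrence of the ceiling, obtaining
$$9n\lceil\sqrt{n}\rceil - n + 6\lceil\sqrt{n}\rceil \leq 9n(\sqrt{n}+1) - n + 6(\sqrt{n}+1) = 9n^{3/2} + 8n + 6\sqrt{n} + 6,$$
which is exactly the claimed bound. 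Note that for the upper bound it is harmless that distinct $x$- or $z$-edges of the embedding may project onto overlapping collinear segments; counting every such step still yields a valid upper estimate for $\Len(K)$, so no merging of edges is required.

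There is no serious obstacle once Theorem~\ref{thm:cubic-length} is in hand: the result follows from a single projection together with one round of arithmetic. The only points requiring care are confirming that the $xz$-projection is indeed optimal among the three coordinate projections (a direct comparison of coefficients) and applying the ceiling estimate $\lceil\sqrt{n}\rceil \leq \sqrt{n}+1$ consistently, so that the linear term $8n$ and the constant $6$ emerge exactly as stated.
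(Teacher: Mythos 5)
Your proposal is correct and follows essentially the same route as the paper: project the cubic lattice embedding of Theorem~\ref{thm:cubic-length} to the $xz$-plane (equivalently, set $x=z=1$, $y=0$ in $\Step(K)$), sum the $x$- and $z$-coefficients to get $9n\lceil\sqrt{n}\rceil - n + 6\lceil\sqrt{n}\rceil$, and apply $\lceil\sqrt{n}\rceil \leq \sqrt{n}+1$. Your added remarks on why the $xz$-projection is optimal and on overlapping projected segments are sensible elaborations of points the paper treats only briefly.
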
 
\begin{proof} From Theorem~\ref{thm:cubic-length} we have an embedding of $K$ in the cubic lattice. Now project to the plane $y=0$. As described above, this gives a polygonal knot diagram in the planar lattice. Next set $x=z=1$ and $y=0$ in the expression for $\Step(K)$ in Theorem~\ref{thm:cubic-length}. Then 
\begin{align*} \Len(K)& \leq 9n\lceil \sqrt{n}\rceil -n +6\lceil \sqrt{n}\rceil 
\\ & < 9n(\sqrt{n}+1) -n +6(\sqrt{n}+1)
\\ & = 9n^{3/2} +8n+6\sqrt{n} +6.
\end{align*}
\end{proof}

We are now very close to proving Theorem~\ref{thm:ribbonlength}.

\begin{corollary} \label{cor:lattice-length}
Let $K$ be a knot or a link, and assume $K$ has a Hamiltonian RP-graph $G$ with $n$ vertices. Then $K$ has a polygonal knot diagram in the planar lattice with folded ribbonlength at most
$$  \Rib(K) \leq 9n^{3/2} +8n+6\sqrt{n} +6.$$
\end{corollary}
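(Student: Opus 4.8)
The plan is to observe that the corollary is essentially a restatement of Theorem~\ref{thm:lattice-length} in the language of ribbonlength, so almost all of the work has already been done. First I would invoke Theorem~\ref{thm:lattice-length} to obtain a polygonal knot diagram for $K$ in the planar lattice whose length satisfies $\Len(K) \leq 9n^{3/2} + 8n + 6\sqrt{n} + 6$. This diagram is the projection to the plane $y=0$ (the $xz$-plane) of the cubic-lattice embedding produced in Theorem~\ref{thm:cubic-length}.

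Next I would carry out Stage~5 of the outlined method: take this planar diagram as the core curve of a folded ribbon knot and set the width $w=1$. By the definition of folded ribbonlength, when $w=1$ we have $\Rib(K) = \Len(K)/w = \Len(K)$. Substituting the length bound then yields $\Rib(K) \leq 9n^{3/2} + 8n + 6\sqrt{n} + 6$, which is exactly the claim, so no new estimation is required beyond the previous theorem.

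The one point that genuinely needs justification — and the step I expect to be the main obstacle — is that $w=1$ is an \emph{allowed} width for this lattice diagram, so that $\Rib(K)$ is actually defined in the first place. Here the lattice structure is what saves us: since we projected onto the $xz$-plane, every edge of the diagram runs in the $x$- or $z$-direction, so consecutive edges meet either collinearly (fold angle $\pi$, hence no fold) or at a right angle (fold angle $\nicefrac{\pi}{2}$). By Remark~\ref{rmk:fold-length} a right-angle extended fold has folded ribbonlength exactly $1$, and the $3$-unit spacing built into the vertex placement of Section~\ref{sect:embed-graph} leaves enough room that the fold lines remain disjoint and the ribbon has no self-piercings. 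Granting this allowedness, the equality $\Rib(K) = \Len(K)$ holds and the bound follows immediately.
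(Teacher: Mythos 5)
Your proposal is correct and follows the same route as the paper: the corollary is immediate from Theorem~\ref{thm:lattice-length} by taking the planar lattice diagram as the core curve of a folded ribbon of width $w=1$, so that $\Rib(K)=\Len(K)$ and the bound carries over unchanged. Your additional check that $w=1$ is an allowed width (every fold angle is $\nicefrac{\pi}{2}$ or $\pi$, and the lattice spacing keeps the fold lines disjoint) addresses a point the paper leaves implicit but does not alter the argument.
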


\begin{proof}[Proof of Theorem~\ref{thm:ribbonlength}] Recall from Theorem~\ref{thm:Hamiltonian} that the number of vertices $n$ of a Hamiltonian RP-graph of a knot or link $K$ satisfies $n\leq 4\cdot \Cr(K)$.  Apply this to Corollary~\ref{cor:lattice-length} to prove that the folded ribbonlength of $K$ satisfies
\begin{enumerate}
\item $ \Rib(K)\leq  9\Cr(K)^{3/2} +8\Cr(K) +6\sqrt{\Cr(K)} +6$, if $K$ is minimally Hamiltonian, and
\item  $\Rib(K)\leq 72\Cr(K)^{3/2} +32\Cr(K)+12\sqrt{\Cr(K)} +6$, otherwise.
\end{enumerate}
\end{proof}

\section{Conclusion}
All of the work in this paper is aimed at better understanding the ribbonlength crossing number problem. That is, finding constants in Equation 1 from Section~\ref{sect:crossing-history};
$$c_1\cdot( \Cr(K))^\alpha\leq \Rib(K_w)\leq c_2\cdot (\Cr(K))^\beta.$$

As discussed in Section~\ref{sect:crossing-history}, we know that the infinite families of knots and links (2-bridge, twist, torus, certain pretzel knots) examined so far have representatives where $\beta=1$.

If we play with the bounds from Equation (2) in Theorem~\ref{thm:ribbonlength} in WolframAlpha, we see that $72\Cr(K)^{3/2} +32\Cr(K)+12\sqrt{\Cr(K)} +6 < 80\Cr(K)^{3/2}$ when $\Cr(K)\geq 20$. 
This gives constants $c_2=80$ and $\beta=\nicefrac{3}{2}$ (provided $\Cr(K)\geq 20$).

Previously in Theorem~\ref{thm:crossing-squared} we showed that any non-split link $K$ satisfies
\begin{equation} \Rib(K)\leq 0.64\Cr(K)^2+2.55Cr(K) + 2.03.\label{eq:squared}
\end{equation}

How does this folded ribbonlength compare with our results from Theorem~\ref{thm:ribbonlength}? A computation in WolframAlpha reveals that  the values of Equation~\ref{eq:squared} are smaller than (2) in Theorem~\ref{thm:ribbonlength} only when $\Cr(K)\leq 12,748$. Thus both Theorems~\ref{thm:crossing-squared} and ~\ref{thm:ribbonlength} are useful when considering folded ribbonlength.

We remind the reader that our results were inspired by work done on the ropelength problem. Many other authors have attempted to look at this problem, and currently the best known bounds relating ropelength to crossing number come from a paper by Y. Diao, C. Ernst, A. Por and U. Ziegler~\cite{DEPZ}. There, they show for any knot $K$ that there exists a constant $c>0$ such that $\Rop(K)\leq c\cdot\Cr(K)\ln^5(\Cr(K))$. That is, ropelength is almost linear.  We have not attempted to see if these results can be applied to the folded ribbonlength problem. 

Finally, here are just a few open questions for the folded ribbonlength problem (many are the direct analogies for the ropelength problem).
\begin{enumerate}
\item Is there a family of knots and links for which folded ribbonlength has super-linear growth in crossing number? More formally, for any $1<p\leq \nicefrac{3}{2}$, is there a constant $c>0$ and an infinite family of knots and links such that for any representative  $K$ of the family, $\Rib(\kw)\geq c\cdot\Cr(K)^p$? 
\item Is it possible to do better than Theorem~\ref{thm:ribbonlength}? Equivalently, is it possible to improve the embedding of a knot in the planar lattice such that $\Rib(\kw)\leq O(\Cr(K)^p)$ for some constant $1\leq p<\nicefrac{3}{2}$?
\item  Is it possible to prove that folded ribbonlength is at most linear in crossing number for all knots and links? That is, for all knots and links, is there a constant $c>0$ such that $\Rib(\kw)\leq c\cdot\Cr(K)$?
\end{enumerate}


\section{Acknowledgments}
My colleague Jason Cantarella has been a source of many helpful comments and often acted as a sounding board for my ideas. Thank you for your continuing support. I also thank Michael Bush for triple checking all the routine algebra in the lattice length equations.

All of my work on folded ribbon knots has been done while advising undergraduate research projects on the topic. I wish to give a special thank you to all of my undergraduate research students who have contributed to the project over the years.
\begin{compactitem}
\item Shivani Aryal and Shorena Kalandarishvili: funded by Smith College's 2009 Summer Undergraduate Research Fellowship program.\item Eleanor Conley, Emily Meehan and Rebecca Terry: funded by the Center for Women in Mathematics at Smith College Fall 2011, which was funded by NSF grant DMS 0611020. 
\item Mary Kamp and Catherine (Xichen) Zhu: funded by 2015 Washington \& Lee Summer Research Scholars program.
\item Corinne Joireman and Allison Young: funded by 2018 W\&L Summer Research Scholars program.
\item John Carr Haden and Troy Larsen: funded by 2020 W\&L Summer Research Scholars program.
\end{compactitem}


\bibliographystyle{amsalpha}

\end{document}